\newtheorem{prop}{Proposition}
\newtheorem{theorem}{Theorem}
\newtheorem{corollary}{Corollary}
\newtheorem{remark}{Remark}
\newtheorem*{theorem*}{Theorem}
\newcommand\PP{ \mathbf{P} }
\DeclareMathOperator{\e}{e}
\title{A solvable class of renewal processes}
 \author{Nathana\"el Enriquez and Nathan Noiry}
\date{}
\begin{document}
\maketitle

\abstract{When the distribution of the inter-arrival times of a renewal process is a mixture of geometric laws, we prove that the renewal function of the process is given by the moments of a probability measure which is explicitly related to the mixture distribution. We also present an analogous result in the continuous case when the inter-arrival law is a mixture of exponential laws. We then observe that the above discrete class of renewal processes provides a solvable family of random polymers. Namely, we obtain an exact representation of the partition function of polymers pinned at sites of the aforementioned renewal processes. In the particular case where the mixture measure is a generalized Arcsine law, the computations can be explicitly handled. 
\bigskip

\noindent{\slshape\bfseries Keywords.} Renewal Process, Stieltjes Transform.
\bigskip

\noindent{\slshape\bfseries 2010 Mathematics Subject
Classification.} 60K05, 82D60, 30D35.
}

\section{Renewal theory for mixtures of geometric laws}

Let $K$ be a probability measure on $\mathbf{N} \cup \{\infty\} = \{1,2, \ldots, \} \cup \{ \infty\}$. Let $\eta = (\eta_n)_{n \geq 1}$ be an i.i.d. sequence of random variables with law $K$. We consider $\eta$ as inter-arrival times of a renewal process and we define the random variables $\tau_0=0$ and for all $n \geq 1$, $\tau_n = \sum_{1 \leq i \leq n} \eta_i$. Finally, we define the renewal process associated to $K$ as the random set $\tau:=\{\tau_i; \, i \geq 0\}$. We will denote by $\mathbf{P}$ the law of $\tau$. 

Before stating our main result, let us introduce the Stieltjes transform of a positive measure, which turns out to be the key notion in this framework. Let $\mathbf{C}_+ := \{ z \in \mathbf{C}, \, \Im z > 0 \}$. Let $\mu$ be a positive measure on $\mathbf{R}$. The Stieltjes transform of $\mu$ is the analytic function $s_\mu$, defined from $\mathbf{C}_+$ to $\mathbf{C}_+$ by:
\begin{equation*}
s_\mu(z) = \int_\mathbf{R} \frac{\mathrm{d}\mu(x)}{x-z}.
\end{equation*}

Let us mention the useful fact that, for almost every real point $x \in \mathbf R$, $s_\mu(x+it)$ converges as $t \rightarrow 0^+$ (see (1.2) in \cite{poltoratski2010hilbert}). Moreover, the real and imaginary parts of the limit correspond respectively to the Hilbert transform of $\mu$ at $x$, denoted by $H_\mu(x)$, and to the density $f_\mu$ of the absolutely continuous part of $\mu$ with respect to the Lebesgue measure (see formula (1.2.8) of Simon \cite{simon2005orthogonal}). We also remind that the Hilbert transform coincides with the Cauchy principal value $H_\mu(x) = \fint \frac{\mathrm{d}\mu(y)}{y-x}$. In short, for almost every $x \in \mathbf{R}$,
\[  \lim\limits_{ t \rightarrow 0^+} s_\mu(x+it) = H_\mu(x) + i \pi f_\mu(x).  \]
\begin{theorem}\label{th:Renewal}
Let $\mu$ be a probability measure on $[0,1]$ and suppose that, for all $n \geq 1$, 
\begin{equation}\label{eq:Assumption0}
\begin{array}{l}
K(n) = \int_0^1 (1-x)^{n-1} x \mathrm{d}\mu(x),\\
K(\{\infty\}) = \mu(\{0\}).
\end{array}
\end{equation}
Then, there exists a probability measure $\nu$ on $[0,1]$ such that:
\begin{equation}\label{eq:Subordination0}
s_\nu(z) s_\mu(1-z) = \frac{1}{z(1-z)}.
\end{equation}
Moreover, for all $N \geq 0$:
\begin{equation}\label{eq:MomentRepresentation0}
\mathbf{P}(N \in \tau) = \int_0^1 x^N \mathrm{d}\nu(x).
\end{equation}
\end{theorem}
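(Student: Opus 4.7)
The plan is to compute the generating function of the renewal sequence $u_N := \mathbf{P}(N \in \tau)$ using the mixture assumption \eqref{eq:Assumption0}, to define a candidate Stieltjes transform $\phi$ dictated by \eqref{eq:Subordination0}, and then to verify via the Nevanlinna--Herglotz machinery that $\phi$ is the Stieltjes transform of a probability measure $\nu$ on $[0,1]$. The moment formula \eqref{eq:MomentRepresentation0} then drops out by matching Taylor coefficients.

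First I would use the standard renewal identity $U(\zeta) := \sum_{N \geq 0} u_N \zeta^N = (1-\hat K(\zeta))^{-1}$, valid for $|\zeta|<1$, with $\hat K(\zeta) := \sum_n K(n)\zeta^n$. Plugging in \eqref{eq:Assumption0} and summing a geometric series inside the integral gives $\hat K(\zeta) = \int_0^1 x\zeta/(1-(1-x)\zeta)\,d\mu(x)$, and a one-line algebraic simplification yields
\[ 1 - \hat K(\zeta) = (1-\zeta)\, J(\zeta), \qquad J(\zeta) := \int_0^1 \frac{d\mu(x)}{1-(1-x)\zeta}. \]
A change of variable turns $J$ into a genuine Stieltjes transform, $\zeta J(\zeta) = s_\mu(1-1/\zeta)$, and one checks that the resulting identity $U(\zeta) = \zeta/[(1-\zeta)\,s_\mu(1-1/\zeta)]$ coincides, under the correspondence $U(\zeta) = -\zeta^{-1}s_\nu(1/\zeta) = \int d\nu(x)/(1-x\zeta)$, with the formula $s_\nu(w) = \phi(w) := 1/(w(1-w)s_\mu(1-w))$. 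This reduces the theorem to showing that $\phi$ is the Stieltjes transform of a probability measure on $[0,1]$.

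The main substance is the Herglotz verification: analyticity of $\phi$ on $\mathbf{C}\setminus[0,1]$ is clear since $s_\mu$ is analytic and nonvanishing on $\mathbf{R}\setminus[0,1]$, and the normalisation $-iy\,\phi(iy)\to 1$ as $y\to\infty$ follows from $s_\mu(1-w)\sim 1/w$. The obstacle is that $w(1-w)$ is not a Herglotz function, so the naive sign chase in the product $w(1-w)s_\mu(1-w)$ fails. To overcome this I would first use the substitution $y = 1-x$ to obtain $s_\mu(1-w) = -s_{\tilde\mu}(w)$, where $\tilde\mu$ is the pushforward of $\mu$ under $x\mapsto 1-x$, and then the key algebraic identity
\[ (1-w)\,s_{\tilde\mu}(w) = \int_0^1 \frac{(1-y)+(y-w)}{y-w}\,d\tilde\mu(y) = 1 + s_{\sigma'}(w), \]
where $\sigma'(dy) := (1-y)\tilde\mu(dy)$ is a non-negative finite measure on $[0,1]$. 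Combining gives $w(1-w)s_\mu(1-w) = -w\bigl(1+s_{\sigma'}(w)\bigr)$, and a short computation for $w = u+iv$ with $v > 0$ yields
\[ \Im\bigl[\,w\bigl(1+s_{\sigma'}(w)\bigr)\bigr] \,=\, v\Bigl(1 + \int_0^1 \frac{y}{(y-u)^2+v^2}\,d\sigma'(y)\Bigr) \,>\, 0. \]
Hence $w\bigl(1+s_{\sigma'}(w)\bigr) \in \mathbf{C}_+$, and since $z\mapsto -1/z$ preserves $\mathbf{C}_+$, also $\phi(w) \in \mathbf{C}_+$ for $w \in \mathbf{C}_+$. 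The Nevanlinna representation then delivers the required probability measure $\nu$ on $[0,1]$ with $s_\nu = \phi$; relation \eqref{eq:Subordination0} holds by the very definition of $\phi$, and \eqref{eq:MomentRepresentation0} follows by expanding $U(\zeta) = \int_0^1 d\nu(x)/(1-x\zeta)$ as a Taylor series in $\zeta$.
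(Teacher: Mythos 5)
Your proof is correct and follows the same overall strategy as the paper: compute the generating function of the renewal sequence, recognise it as a function related to a Stieltjes transform via the functional equation \eqref{eq:Subordination0}, verify the Nevanlinna--Herglotz property, and read off the moments. The difference lies in how the crucial Herglotz verification is carried out. You deliberately factor the denominator as $w(1-w)s_\mu(1-w)$, observe that this destroys the manifest Herglotz structure (since $w(1-w)$ does not preserve $\mathbf{C}_+$), and then repair the situation by passing to the reflected measure $\tilde\mu$ and extracting the subordinate Stieltjes transform $s_{\sigma'}$ with $\sigma'(\mathrm{d}y)=(1-y)\tilde\mu(\mathrm{d}y)$, finishing with a direct imaginary-part computation. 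The paper avoids the factorisation altogether: it keeps the denominator in its integral form $-z-\int_0^1 \frac{zx}{1-z-x}\,\mathrm{d}\mu(x)$ and simply observes that each integrand $z\mapsto \frac{zx}{1-z-x}$ is a real homography with determinant $x(1-x)>0$ and hence preserves $\mathbf{C}_+$, so the whole denominator maps $\mathbf{C}_+$ to $\mathbf{C}_-$ and its reciprocal is Herglotz. The paper's route is shorter and conceptually cleaner because it never introduces the non-Herglotz factor $w(1-w)$ in the first place; your route is more computational but also instructive, since the identity $(1-w)s_{\tilde\mu}(w)=1+s_{\sigma'}(w)$ makes explicit which modified measure governs the positivity, and it would still work in settings where a convenient unfactored integral form is not available. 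Both are valid proofs of the theorem.
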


\begin{remark}
The mapping from $\mu$ to $\nu$ is an involution. In particular, if one assumes \eqref{eq:MomentRepresentation0}, then the inter-arrival distribution satisfies \eqref{eq:Assumption0} with $\mu$ defined by
\[ s_\mu(z) =  \left[  z (1-z) s_\nu(1-z) \right]^{-1}. \]
This presents some interest for the applications. Indeed, one usually has access to the renewal measure $\mathbf{P}(N \in \tau)$ and would like to infer the underlying inter-arrival distribution.
\end{remark}
\begin{remark}
The family of generalized Arcsine laws with parameters $(1-v, v)$, $v \in (0,1)$, defined by the densities
\begin{equation}
\frac{\sin(\pi v)}{\pi}x^{-v} (1-x)^{v-1}  \mathbf{1}_{x \in [0,1]} \mathrm{d}x,
\end{equation}
are fixed points of the involution $\mu \mapsto \nu$. It can be easily checked from the expression of their Stieltjes transform, which are equal to
\[  \frac{1}{1-z}\left(  \frac{z}{1-z} \right)^{-v} . \]
We conjecture that these distributions are actually the {\it only} fixed points. This family of measures will be further investigated in Section \ref{sec:computations} in the context of random polymers.
\end{remark}

Before proving this Theorem, we state as a Corollary some more explicit formulas for $\mathbf{P}(N \in \tau)$ in two specific but generic cases. 

The first one deals with the case where $\mu$ is a finite sum of Dirac masses. This case was already considered in the frame of the Fixman-Freire algorithm
\cite{Fixman}. Indeed, these authors wanted to use an approximation of $\mu$ by a finite sum of Dirac masses in order to compute an approximation of $\mathbf{P}(N \in\tau)$. It turns out that they compute this last quantity using a rather heavy recursive scheme and we give here a clear and tractable formula for it.

The second one deals with the cases where $\mu$ admits a density with respect to the Lebesgue measure, supported on a finite union of intervals. This includes the previous case considered by Nagaev \cite{Nagaev2015} which assumed among other technical hypothesis that the density of $\mu$ is Lipschitz. The formula we give below for the density of $\nu$ is quite synthetic and seems to amend the intricate formula of Nagaev.
 
\begin{corollary}\label{coro:coro1}
\begin{enumerate}
\item When $\mu$ is a finite sum of Dirac masses, this is also the case for $\nu$. More precisely, let $0 < x_1 < x_2 < \cdots < x_n < 1$ and $a_1, \ldots, a_n >0$ with $a_1 + \cdots + a_n = 1$ be such that $\mu = \sum_{1 \leq i \leq n} a_i \delta_{x_i}$. Then, 
\[ s_\nu(z) = \frac{1}{z(1-z) \sum_{1 \leq i \leq n} \frac{a_i}{z-(1-x_i)}} \]
and $\nu$ is purely atomic and admits exactly $n+1$ atoms which are located in increasing order at $0, y_1, \ldots, y_{n-1},1$ where $y_i$ is the only root of $x \mapsto \sum_{ 1 \leq i \leq n} \frac{a_i}{z - (1-x_i)}$ on the interval $(1-x_{n-i+1},1-x_{n-i})$. Moreover, the mass $\nu(\{y_i\})$ is given by the residue of the rational function $s_\nu$ at $y_i$. In that case, Equation \eqref{eq:MomentRepresentation0} rewrites 
\[  \mathbf{P}(N \in \tau) = \frac{1}{m_K} +  \sum\limits_{1 \leq i \leq n-1} y_i^N \nu(\{y_i\}). \]

\item When $\mu$ is absolutely continuous with respect to the Lebesgue measure with density $f_\mu$ supported on a finite union of disjoint intervals $[a_1,b_1] \sqcup [a_2,b_2] \sqcup \cdots \sqcup [a_n,b_n]$ with $0 \leq a_1 < b_1 < a_2 < \cdots < a_n < b_n \leq 1$, the measure $\nu$ is a sum of a pure point measure $\nu_{pp}$ and an absolutely continuous one $\nu_{ac}$. 

The measure $\nu_{pp}$ admits exactly one atom on each interval $(1-a_{n-i+1},1-b_{n-i})$ located at the point $y_i$ defined as the only root on that interval of the function $x \mapsto \int_0^1 \frac{\mathrm{d}\mu(s)}{s-(1-x)}$. The mass $\nu(\{y_i\})$ is given by the residue of the function $s_\nu$ at $y_i$. Moreover, if $m_K < \infty$, then $\nu$ admits an extra atom at $1$ with mass $1/ m_K$, and if $\int_0^1 \frac{\mathrm{d}\mu(x)}{1-x} < \infty $, then $\nu$ admits also an extra atom at $0$ with mass $ ( \int_0^1 \frac{\mathrm{d}\mu(x)}{1-x} )^{-1}$. Apart from these points, $\nu_{pp}$ admits no other atom.

The support of the measure $\nu_{ac}$ is $[1-b_n,1-a_n] \sqcup \cdots \sqcup [1-b_1,1-a_1]$, and its density is given by:
\begin{align*}  
f_{\nu_{ac}}(x) 
&= \lim\limits_{t \rightarrow 0^+} \frac{1}{\pi} \Im s_{\nu}(x+it)   \\
&=  \frac{1}{x(1-x)} \frac{f_{\mu}(1-x)}{H_\mu(1-x)^2 + \pi^2 f_{\mu}(1-x)^2}.
\end{align*}
In that case, Equation \eqref{eq:MomentRepresentation0} rewrites 
\[  \mathbf{P}(N \in \tau) = \frac{1}{m_K} +  \sum\limits_{1 \leq i \leq n-1} y_i^N \nu(\{y_i\}) + \int_0^1 \frac{(1-x)^{N-1}}{x} \frac{\mathrm{d}\mu(x)}{H_\mu(x)^2 + \pi^2 f_{\mu}(x)^2} . \]
\end{enumerate}
\end{corollary}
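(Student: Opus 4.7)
The plan is to derive both parts from Theorem \ref{th:Renewal}, which supplies the identity
\[ s_\nu(z) = \frac{1}{z(1-z)\, s_\mu(1-z)}, \]
combined with standard Stieltjes inversion: atoms of $\nu$ correspond to simple poles of $s_\nu$ on $[0,1]$ (the mass being recovered from the residue, up to sign), while the absolutely continuous part has density $f_{\nu_{ac}}(x)=\frac{1}{\pi}\lim_{t\to 0^+}\Im s_\nu(x+it)$.

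For Part 1, substituting $\mu=\sum_i a_i\delta_{x_i}$ into the definition of $s_\mu$ gives $s_\mu(1-z)=\sum_i a_i/(z-(1-x_i))$, which yields the claimed rational formula for $s_\nu$. Since $s_\nu$ is rational, $\nu$ is purely atomic. Its poles are $z=0$, $z=1$, and the zeros of $g(z):=\sum_i a_i/(z-(1-x_i))$. On each interval $(1-x_{n-i+1}, 1-x_{n-i})$ between consecutive poles of $g$, the function $g$ is strictly decreasing (its derivative is $-\sum_i a_i/(z-(1-x_i))^2<0$) and ranges from $+\infty$ to $-\infty$, so it has exactly one zero $y_i$; this produces the announced $n-1$ interior atoms. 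The mass at $z=1$ equals $1/m_K$ since $s_\mu(0)=\int_0^1 \mathrm{d}\mu(x)/x=m_K$, and the residue computation for each interior pole finishes the description. The formula for $\mathbf{P}(N\in\tau)$ then follows by integrating $x^N$ against this purely atomic measure.

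For Part 2, the support of $\nu_{ac}$ is determined by where $\Im s_\mu(1-x-i0^+)\neq 0$, i.e.\ where $f_\mu(1-x)>0$, giving $[1-b_n,1-a_n]\sqcup\cdots\sqcup[1-b_1,1-a_1]$. Since $\Im z=t>0$ implies $\Im(1-z)=-t<0$ and Stieltjes transforms satisfy $s_\mu(\bar w)=\overline{s_\mu(w)}$, the relevant boundary value is $s_\mu(1-x-i0^+)=H_\mu(1-x)-i\pi f_\mu(1-x)$; inserting this into the key identity and extracting $\frac{1}{\pi}\Im$ produces exactly the claimed density. Atoms of $\nu$ can only arise from real zeros of $z(1-z)s_\mu(1-z)$ lying outside the support of $f_{\nu_{ac}}$. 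On each gap $(b_j,a_{j+1})$ of $\mathrm{supp}(\mu)$, the real function $y\mapsto s_\mu(y)$ is strictly increasing (derivative $\int\mathrm{d}\mu(x)/(x-y)^2>0$) and ranges from $-\infty$ to $+\infty$, hence has a unique zero; reparametrising by $z=1-y$ converts these into the atoms $y_i$ on $(1-a_{n-i+1},1-b_{n-i})$. The boundary points $z=1$ and $z=0$ yield additional poles precisely when $m_K$ and $\int\mathrm{d}\mu(x)/(1-x)=-s_\mu(1)$ are finite, with masses $1/m_K$ and $(\int\mathrm{d}\mu/(1-x))^{-1}$ respectively, computed exactly as in Part 1. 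The final integral representation then follows from the change of variable $u=1-x$ applied to $\int_0^1 x^N f_{\nu_{ac}}(x)\,\mathrm{d}x$, using $\mathrm{d}\mu=f_\mu\,\mathrm{d}u$.

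The main obstacle is verifying that the listed atoms are exhaustive: one must rule out hidden poles of $s_\nu$ inside the support intervals $[1-b_i,1-a_i]$ (guaranteed because $\pi f_\mu(1-x)>0$ keeps the denominator from vanishing on the real line there) and treat the boundary cases carefully, since the existence of atoms at $0$ and $1$ is conditional on the finiteness of $\int\mathrm{d}\mu/(1-x)$ and $m_K$. A secondary technical point is confirming the $\pm\infty$ limits of $s_\mu$ at each gap endpoint, which requires mild control on $f_\mu$ near $a_j$ and $b_j$; once this is in place, the intermediate value argument yielding the $y_i$ is immediate.
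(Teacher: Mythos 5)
The paper states Corollary~\ref{coro:coro1} without proof, treating it as a direct consequence of Theorem~\ref{th:Renewal}; your proof is precisely the natural computation one would carry out, and it is correct. You substitute $s_\mu(1-z)=\sum_i a_i/(z-(1-x_i))$ into the subordination identity, use the monotonicity of $s_\mu$ on gaps of $\mathrm{Supp}(\mu)$ to locate the zeros that produce atoms of $\nu$, invoke Stieltjes--Plemelj inversion (with the sign flip coming from $\Im(1-z)<0$, correctly handled via Schwarz reflection) to identify the absolutely continuous density, compute the boundary masses at $0$ and $1$ by the standard $-i\epsilon\,s_\nu(x_0+i\epsilon)\to\nu(\{x_0\})$ limit, and finish with the change of variable $u=1-x$. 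One small sign remark: with the convention $s_\nu(z)=\int \mathrm{d}\nu(x)/(x-z)$, an atom of mass $m$ at $x_0$ gives a residue $-m$ at $z=x_0$, so the corollary's phrase ``given by the residue of $s_\nu$'' should be read up to sign; you acknowledge this. You also rightly flag the one genuine technical gap in the statement: for the existence of a zero of $s_\mu$ on each gap $(b_j,a_{j+1})$, hence an atom of $\nu$, one needs $s_\mu$ to diverge to $\mp\infty$ at the gap endpoints, which holds under mild extra conditions on $f_\mu$ near $\partial\,\mathrm{Supp}(\mu)$ (e.g.\ positivity and boundedness away from zero) but not in full generality -- if $f_\mu$ vanishes fast enough at an endpoint, $s_\mu$ can stay finite of one sign on a gap and the corresponding atom may not exist. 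The paper leaves this implicit; your observation is a fair critique rather than a defect of your argument.
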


\begin{proof}[Proof of Theorem \ref{th:Renewal}.]
Let $G(z) = \sum_{ N \geq 0} \mathbf{P}(N \in \tau) z^N$ be the generating series associated to the sequence $(\mathbf{P}(N \in \tau))_{N \geq 0}$. Then,
\begin{align*}
G(z) &= 1 + E_{\mathbf{P}}\left[ \sum\limits_{k \geq 1} z^{\eta_1 + \cdots + \eta_k} \right] \\
     &= \frac{1}{1- E_{\mathbf{P}}[z^{\tau_1}]}.
\end{align*}
Therefore, using \eqref{eq:Assumption0}, we deduce that:
\begin{equation}\label{eq:EqGen}
-\frac{1}{z}G \left( \frac{1}{z} \right) =   \frac{1}{- z -   \int_0^1 \frac{zx}{1-z-x} \mathrm{d}\mu(x)}.
\end{equation}
The above equality holds for all complex $z \in \mathbf{C}$ such that $1/z$ is inside the disk of convergence of $G$. It extends analytically to the whole complex upper half-plane $\mathbf{C}_+$ thanks to the right-hand side expression. Now, notice that for all $x \in (0,1)$, the homographic function $z \mapsto \frac{zx}{1-z-x}$ preserves $\mathbf{C}_+$ since it is of determinant $x(1-x)>0$. Therefore, the function $z \mapsto -\frac{1}{z}G \left( \frac{1}{z} \right)$ preserves $\mathbf{C}_+$ and is a Nevanlinna's function. Moreover, $-\frac{1}{z}G \left( \frac{1}{z} \right) \sim - \frac{1}{z}$ as $|z| \rightarrow + \infty$. Therefore, using the characterization of Nevanlinna functions which are Stieltjes transforms of probability measures \cite{Akhieezr1965} (page 93), there exists a probability measure $\nu$ such that
\begin{equation}\label{eq:StieltRepres}
-\frac{1}{z}G \left( \frac{1}{z} \right) = \int_\mathbf{R} \frac{\mathrm{d}\nu(x)}{x-z}.
\end{equation}
Identifying the $1/z$ coefficients in Equation \eqref{eq:StieltRepres} leads to Equation \eqref{eq:MomentRepresentation0}.

Once we have got the existence of the measure $\nu$, we want to prove it satisfies Equation \eqref{eq:Subordination0}. Indeed, combining \eqref{eq:EqGen} and \eqref{eq:StieltRepres}, we get that
\begin{equation}
\forall z \in \mathbf{C}_+, \quad \left( \int_\mathbf{R} \frac{\mathrm{d}\nu(x)}{x-z}  \right)^{-1} =  - z -   \int_0^1 \frac{zx}{1-z-x} \mathrm{d}\mu(x).
\end{equation}
Finally, let us notice that since the support of $\mu$, $\mathrm{Supp}(\mu)$, is included in $[0,1]$, its Stieltjes transform $s_\mu$ is analytic on $\mathbf{R} \setminus (0,1)$. In turn, by Equation \eqref{eq:Subordination0}, the Stieltjes transform of $\nu$ is analytic on $\mathbf{R} \setminus (0,1)$, which implies that $\mathrm{Supp}(\nu) \subset [0,1]$. 
\end{proof}


Let us comment on assumption \eqref{eq:Assumption0}. In the proof of Theorem \ref{th:Renewal}, we crucially rely on it in order to prove that the function
\[ z \mapsto  -z E_\mathbf{P}  \left[ \left( \frac{1}{z}  \right)^{\tau_1 } \right]  \]
preserves the upper half-plane. In the generic case, this property is not satisfied. 


Let us finally mention that Theorem \ref{th:Renewal} allows to easily recover the classical renewal Theorem in our setting.

\begin{prop}[Basic renewal theorem]\label{prop:renewalth}
Under the setting of Theorem \ref{th:Renewal},
\begin{equation*}
\mathbf{P}(N \in \tau) \underset{  N \rightarrow + \infty}{ \longrightarrow  } \frac{1}{m_K},
\end{equation*}
where $m_K := \sum\limits_{n \geq 1} n K(n)$.
\end{prop}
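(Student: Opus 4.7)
The natural approach is to combine the moment representation \eqref{eq:MomentRepresentation0} with the subordination identity \eqref{eq:Subordination0}. Writing $\mathbf{P}(N \in \tau) = \int_0^1 x^N\,\mathrm{d}\nu(x)$, dominated convergence applied to the bounded sequence $x^N$ on $[0,1]$ immediately gives
\[ \mathbf{P}(N \in \tau) \underset{N \to \infty}{\longrightarrow} \nu(\{1\}), \]
since $x^N \to \mathbf{1}_{\{x=1\}}$ pointwise and $\nu$ is a probability measure. The proposition is therefore reduced to identifying the mass of $\nu$ at the endpoint $1$ with $1/m_K$.

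To extract $\nu(\{1\})$ from $s_\nu$, I would use the standard boundary formula
\[ \nu(\{1\}) = \lim_{t \to 0^+}(-it)\,s_\nu(1+it), \]
which is itself a short dominated-convergence argument on the defining integral of $s_\nu$ (with integrand bounded in modulus by $1$ and pointwise limit $\mathbf{1}_{\{x=1\}}$). Substituting the subordination relation \eqref{eq:Subordination0} at $z=1+it$, which sends $1-z$ to $-it$, yields
\[ \nu(\{1\}) = \lim_{t \to 0^+}\frac{1}{(1+it)\,s_\mu(-it)}, \]
so the whole question reduces to the boundary behaviour of $s_\mu$ at the origin approached from below.

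A short calculation using \eqref{eq:Assumption0} together with $\sum_{n \geq 1} n(1-x)^{n-1} = x^{-2}$ expresses $m_K$ as $\int_0^1 \mathrm{d}\mu(x)/x \in [1,+\infty]$. Splitting $1/(x+it)$ into real and imaginary parts and applying monotone convergence to the real part shows that $s_\mu(-it)\to m_K$ as $t\to 0^+$ in the extended sense $[1,+\infty]$. The only mildly delicate point is the case $m_K = +\infty$: there $|s_\mu(-it)|$ diverges and the previous display forces $\nu(\{1\}) = 0 = 1/m_K$; when $m_K < \infty$ the identification $\nu(\{1\}) = 1/m_K$ is direct. In both regimes one obtains the desired conclusion, and the argument is a clean application of the Stieltjes-transform machinery set up in the proof of Theorem~\ref{th:Renewal}.
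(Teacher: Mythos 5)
Your proposal is correct and follows the paper's own proof essentially line for line: dominated convergence on the moment representation to reduce to $\nu(\{1\})$, the boundary formula $\nu(\{1\}) = \lim_{t\to 0^+}(-it)s_\nu(1+it)$, the subordination identity to transfer this to the behaviour of $s_\mu$ near $0$, and the geometric-series identity $\int_0^1 x^{-1}\,\mathrm{d}\mu(x) = m_K$. Your extra care with the $m_K = +\infty$ case is a small but welcome addition over the paper's terser treatment.
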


\begin{proof}
From Equation \eqref{eq:MomentRepresentation0} and the dominated convergence Theorem,
\[  \mathbf{P}(N \in \tau) \underset{  N \rightarrow + \infty}{ \longrightarrow  } \nu(\{1\}).  \]
By the dominated convergence theorem, the mass $\nu(\{1\})$ is equal to the limit of $-(it) \times s_{\nu}(1+it)$ as $t \rightarrow 0^+$. By Equation \eqref{eq:Subordination0}, this is equal to
\[  \frac{1}{s_\mu (0)} = \left( \int_0^1 \frac{\mathrm{d}\mu(x)}{x}  \right)^{-1} = \left( \int_0^1  \sum\limits_{n \geq 1} n (1-x)^{n-1} x \mathrm{d}\mu(x)  \right)^{-1}  = \frac{1}{m_K}.  \]
\end{proof}

\section{The continuous counterpart: mixture of exponential laws}

In this section, we still consider $\eta = (\eta_n)_{n \geq 1}$ a sequence of i.i.d. positive random, but we now suppose that they admit a density denoted by $f_\eta$ supported on $\mathbf{R}_+$. We will denote by $m_\eta$ their mean.

\begin{theorem}\label{th:RenewalContinuous}
Let $\mu$ be a probability measure on $[0,+\infty)$ and suppose that, for all $x>0$, 
\begin{equation}\label{eq:AssumptionContinuous0}
f_\eta(x) = \int_0^{+\infty} s \e^{- s x} \mathrm{d}\mu(s).
\end{equation}
Define by $H(x)$ the intensity of the renewal process with inter-arrivals $(\eta_i)_{i \geq 1}$.

Then, there exists a positive measure $\nu$ on $[0,+\infty)$ such that:
\begin{equation}\label{eq:SubordinationContinuous0}
(1 + s_\nu(z))s_\mu(z) = \frac{-1}{z}.
\end{equation}
Moreover, for all $x>0$:
\begin{equation}\label{eq:MomentRepresentationContinuous0}
H(x) = \int_0^{+\infty} e^{-xs} \mathrm{d}\nu(s).
\end{equation}
\end{theorem}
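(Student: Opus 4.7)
The approach mirrors the proof of Theorem~\ref{th:Renewal}, with Laplace transforms playing the role of generating series and Stieltjes transforms of positive measures on $[0,+\infty)$ replacing those of probability measures on $[0,1]$. I begin with a Fubini computation based on \eqref{eq:AssumptionContinuous0} to obtain, for $\Re z > 0$,
\[ \int_0^{+\infty} \e^{-zx} f_\eta(x)\, \mathrm{d}x \;=\; \int_0^{+\infty} \frac{s}{s+z}\, \mathrm{d}\mu(s) \;=\; 1 - z\, s_\mu(-z), \]
where by convention $s_\mu(-z) = \int_0^{+\infty}\mathrm{d}\mu(s)/(s+z)$. Summing the geometric series $H = \sum_{k \geq 1} f_\eta^{\ast k}$ in the Laplace domain then gives
\[ \int_0^{+\infty} \e^{-zx} H(x)\, \mathrm{d}x \;=\; \frac{1}{z\, s_\mu(-z)} - 1. \]
This suggests defining $\Phi(w) := -1/(w\, s_\mu(w)) - 1$ on $\mathbf{C} \setminus [0,+\infty)$, which agrees with the Laplace transform of $H$ evaluated at $-w$ on the left half-plane $\{\Re w < 0\}$ and makes equation~\eqref{eq:SubordinationContinuous0} tautological once $\Phi$ is shown to equal $s_\nu$ for some positive measure $\nu$ supported in $[0,+\infty)$.

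The existence of such a $\nu$ would follow from the Nevanlinna--Akhiezer representation theorem. The key input is that $\Phi$ is a Nevanlinna function, that is, $\Phi$ sends $\mathbf{C}_+$ into $\overline{\mathbf{C}_+}$. This reduces to verifying that $\Im(z\, s_\mu(z)) \geq 0$ for $z \in \mathbf{C}_+$, and the computation
\[ \Im\bigl(z\, s_\mu(z)\bigr) \;=\; \Im(z)\int_0^{+\infty} \frac{s\, \mathrm{d}\mu(s)}{|s-z|^2} \;\geq\; 0 \]
crucially uses $\mathrm{Supp}(\mu) \subset [0,+\infty)$ so that the integrand carries the factor $s \geq 0$. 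This is the precise analog of the ``homographic function of positive determinant $x(1-x)>0$'' argument used in the proof of Theorem~\ref{th:Renewal}. Analyticity of $\Phi$ on $\mathbf{C}\setminus[0,+\infty)$ is inherited from $s_\mu$, and the behavior $s_\mu(z) \sim -1/z$ at infinity forces $z\, s_\mu(z) \to -1$ and hence $\Phi(z) \to 0$. These three ingredients and the representation theorem yield the desired $\Phi = s_\nu$, with $\nu$ supported in $[0,+\infty)$.

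Equation~\eqref{eq:MomentRepresentationContinuous0} then follows by comparing Laplace transforms on the positive real axis: for $r>0$, $s_\nu(-r) = \int_0^{+\infty}\mathrm{d}\nu(s)/(s+r)$ coincides with the Laplace transform of $x \mapsto \int_0^{+\infty} \e^{-xs}\,\mathrm{d}\nu(s)$, and by the construction of $\Phi$ it also coincides with the Laplace transform of $H$, so uniqueness of the Laplace transform forces $H(x) = \int_0^{+\infty} \e^{-xs}\, \mathrm{d}\nu(s)$. I expect the main technical subtlety to lie in the representation step when $\int_0^{+\infty} s\, \mathrm{d}\mu(s) = +\infty$: in that regime $\nu$ has infinite total mass, so one cannot simply invoke the probability-measure version of Nevanlinna's theorem used in Theorem~\ref{th:Renewal} and must work with its full form, tracking carefully the behavior of $\Phi(iy)$ as $y \to +\infty$ to rule out the linear and constant terms in the Nevanlinna representation.
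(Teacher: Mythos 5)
Your proof follows the same route as the paper: Laplace-transform the renewal density using Fubini, rewrite $\mathcal{L}(z)=1/(z\,s_\mu(-z))-1$, verify the Nevanlinna property of $z\mapsto\mathcal{L}(-z)$, invoke the Akhiezer representation, and conclude by Laplace-transform uniqueness. The only cosmetic difference is that you check the Nevanlinna property by computing $\Im(z\,s_\mu(z))=\Im(z)\int s\,\mathrm{d}\mu(s)/|s-z|^2$ directly, whereas the paper phrases it via the homography $\lambda\mapsto\lambda/(\lambda+s)$; both hinge on $\mathrm{Supp}(\mu)\subset[0,+\infty)$, and the infinite-mass caveat you flag at the end is a real one that the paper also only addresses implicitly (the phrase ``whenever this integral is finite'').
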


As in Corollary \ref{coro:coro1}, we present the two special cases where $\mu$ is a finite sum of Dirac masses and where $\mu$ admits a density with respect to the Lebesgue measure supported on a finite union of intervals. Notice that the first case is another presentation of the content of Chapter 5 of the book of Bladt and Nielsen \cite{bladt2017matrix}, where the authors study the renewal process associated to an absorbed Markov chain which is regularly regenerated.

\begin{corollary}
\begin{enumerate}
\item When $\mu$ is a finite sum of Dirac masses, this is also the case for $\nu$. More precisely, let $0 < x_1 < x_2 < \cdots < x_n$ and $a_1, \ldots, a_n >0$ with $a_1 + \cdots + a_n = 1$ be such that $\mu = \sum_{1 \leq i \leq n} a_i \delta_{x_i}$. Then, 
\[ s_\nu(z) = \frac{-1}{z \sum_{1 \leq i \leq n} \frac{a_i}{x_i - z}} -1 \]
and $\nu$ is purely atomic and admits exactly $n$ atoms which are located in increasing order at $0, y_1, \ldots, y_{n-1}$ where $y_i$ is the only root of $x \mapsto \sum_{ 1 \leq i \leq n} \frac{a_i}{x_i - x}$ on the interval $(x_i,x_{i+1})$. Moreover, the mass $\nu(\{y_i\})$ is given by the residue of the rational function $s_\nu$ at $y_i$. In that case, Equation \eqref{eq:MomentRepresentation0} rewrites 
\[  H(x) = \frac{1}{m_{\eta}} +  \sum\limits_{1 \leq i \leq n-1} \e^{-x y_i} \nu(\{y_i\}). \]

\item When $\mu$ is absolutely continuous with respect to the Lebesgue measure with density $f_\mu$ supported on a finite union of disjoint intervals $[a_1,b_1] \sqcup [a_2,b_2] \sqcup \cdots \sqcup [a_n,b_n]$ with $0 \leq a_1 < b_1 < a_2 < \cdots < a_n < b_n $, the measure $\nu$ is a sum of a pure point measure $\nu_{pp}$ and an absolutely continuous one $\nu_{ac}$. 

The measure $\nu_{pp}$ admits exactly one atom on each interval $(b_i,a_{i+1})$, $1 \leq i \leq n-1$, located at the point $y_i$ defined as the only root on that interval of the function $x \mapsto \int_0^1 \frac{\mathrm{d}\mu(s)}{s-x}$ on the interval $(b_i,a_{i+1})$. The mass $\nu(\{y_i\})$ is given by the residue of the function $s_\nu$ at $y_i$. Moreover, if $m_\eta  < + \infty$, then $\nu$ admits also an extra atom at $0$ with mass $ 1 / m_\eta$. Apart from these points, $\nu_{pp}$ admits no other atom.

The support of the measure $\nu_{ac}$ is $[a_1,b_1] \sqcup \cdots \sqcup [a_n,b_n]$, and its density is given by:
\begin{align*}  
f_{\nu_{ac}}(x) 
&= \lim\limits_{t \rightarrow 0^+} \frac{1}{\pi} \Im s_{\nu}(x+it)   \\
&=  \frac{1}{x} \frac{f_{\mu}(x)}{H_\mu(x)^2 + \pi^2 f_{\mu}(x)^2}.
\end{align*}
In that case, Equation \eqref{eq:MomentRepresentation0} rewrites 
\[  H(x) = \frac{1}{m_{\eta}} +  \sum\limits_{1 \leq i \leq n-1} \e^{-x y_i} \nu(\{y_i\}) + \int_0^{+\infty} \frac{\e^{-sx}}{s} \frac{\mathrm{d}\mu(s)}{H_\mu(s)^2 + \pi^2 f_{\mu}(s)^2} . \]
\end{enumerate}
\end{corollary}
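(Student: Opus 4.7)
The plan is to exploit the subordination identity \eqref{eq:SubordinationContinuous0} provided by Theorem \ref{th:RenewalContinuous}, which I rewrite as
\[ s_\nu(z) = -\frac{1}{z\,s_\mu(z)} - 1, \]
and then to recover $\nu$ by analyzing this explicit function. The atoms of $\nu$ will appear as the real poles of $s_\nu$, their masses being read off the corresponding residues; the absolutely continuous part of $\nu$ will be extracted via the Stieltjes inversion formula $f_{\nu_{ac}}(x) = \pi^{-1}\lim_{t\to 0^+}\Im s_\nu(x+it)$. The announced formulas for $H(x)$ will then follow by inserting the decomposition of $\nu$ into the Laplace representation \eqref{eq:MomentRepresentationContinuous0}, using also the identity $s_\mu(0) = \int_0^{+\infty} s^{-1} \mathrm{d}\mu(s) = m_\eta$, which comes from Fubini applied to \eqref{eq:AssumptionContinuous0}.

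For part (1), $s_\mu(z) = \sum_i a_i/(x_i - z)$ is rational, hence so is $s_\nu$; its real poles coincide with the real zeros of $z\mapsto z\, s_\mu(z)$. On each gap $(x_i, x_{i+1})$, the real-valued function $\sum_j a_j/(x_j - z)$ is strictly increasing (its derivative is a sum of squares) and runs from $-\infty$ to $+\infty$, producing a unique root $y_i$. The origin $z=0$ is a further simple zero of $z\, s_\mu(z)$, since $s_\mu(0) = \sum_j a_j/x_j > 0$ is finite. Altogether this gives $n$ simple poles of $s_\nu$ at $0, y_1, \dots, y_{n-1}$, whose residues deliver the atomic masses, the atom at $0$ contributing $1/m_\eta$.

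For part (2), the same strategy applies, but one must now handle continuous support. On each gap $(b_i, a_{i+1})$, the transform $s_\mu$ is real analytic, strictly increasing, and diverges to $-\infty$ at $b_i^+$ and $+\infty$ at $a_{i+1}^-$ under the customary hard-edge behavior of $f_\mu$, yielding a unique root $y_i$ and hence a simple pole of $s_\nu$. The atom at $0$ is treated exactly as in part (1), and appears precisely when $m_\eta < \infty$. For the absolutely continuous part on $\sqcup_i [a_i, b_i]$, using the boundary value $s_\mu(x + i 0^+) = H_\mu(x) + i\pi f_\mu(x)$ and the rationalization
\[ -\frac{1}{x(H_\mu(x) + i\pi f_\mu(x))} = -\frac{H_\mu(x) - i\pi f_\mu(x)}{x(H_\mu(x)^2 + \pi^2 f_\mu(x)^2)}, \]
Stieltjes inversion yields the announced density $f_{\nu_{ac}}$. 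Substituting all components into \eqref{eq:MomentRepresentationContinuous0} then produces the claimed expansion of $H(x)$.

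The main technical obstacle I anticipate lies in the boundary behavior of $s_\mu$ near the endpoints $a_i, b_i$ in part (2): one must verify that $s_\mu$ genuinely diverges on approach to these points (needed so that the intermediate-value argument in each gap produces exactly one root) and that no surplus singular mass of $\nu$ is concentrated at those boundary points. A careful localization of the Cauchy integral defining $s_\mu$, combined with the monotonicity argument on each gap and the analyticity of $s_\nu$ off the support of $\mu$, should suffice; once this is secured, the remaining assembly of the decomposition of $\nu$ and the Laplace inversion yielding $H(x)$ is routine.
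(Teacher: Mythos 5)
Your approach is correct and is exactly what the paper's framework suggests; the paper itself gives no explicit proof of this corollary, treating it as a direct consequence of Theorem \ref{th:RenewalContinuous}, so the subordination identity $s_\nu(z) = -1/(z\, s_\mu(z)) - 1$ followed by locating poles (atoms), computing residues, and Stieltjes inversion on the gaps is the intended argument. Your computation of the atom at $0$ via $s_\mu(0)=\int_0^{+\infty} s^{-1}\,\mathrm{d}\mu(s)=m_\eta$ and of the density via rationalizing $-1/(x(H_\mu(x)+i\pi f_\mu(x)))$ are both exactly right.

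You are also right to flag the one genuine subtlety: the corollary as stated tacitly assumes that $s_\mu$ diverges to $-\infty$ at each $b_i^+$ and to $+\infty$ at each $a_{i+1}^-$, which guarantees a unique root of $z\,s_\mu(z)$ in every gap. This holds whenever $f_\mu$ stays bounded away from zero near the endpoints of its support intervals (then $\int\frac{f_\mu(s)}{|s-b_i|}\,\mathrm{d}s$ diverges), but can fail if $f_\mu$ vanishes fast enough at an endpoint, in which case $s_\mu$ remains bounded there and the gap may contain no root at all. The paper does not state such a hypothesis explicitly; your proof is complete once this non-degeneracy condition is made an explicit assumption (or is verified from the particular $f_\mu$ at hand). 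A second, small point worth making precise: ruling out singular continuous mass on the gaps and at the endpoints $a_i,b_i$ follows from the boundedness of $s_\nu$ away from its poles on the gaps, via the same criterion (formula (1.2.10) in \cite{simon2005orthogonal}) the paper invokes in the proof of Proposition \ref{prop:propcorr}.
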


\begin{proof}[Proof of Theorem \ref{th:RenewalContinuous}.]
As usual, we introduce the Laplace transform of $H$ and denote it by $\mathcal{L}$ so that
\[  \mathcal{L}(\lambda) = \int_0^{+\infty} \e^{- \lambda x} H(x) \mathrm{d}x.   \]
Notice that this expression is finite for all $\lambda \in \mathbf{C}$ with positive real part.
Since $H(x) = \sum_{k \geq 1} f_{ \eta_1 + \cdots + \eta_k}(x)$, we deduce that:
\[ \forall \lambda \in \mathbf{C} \, \, \text{s.t.} \, \, \Re (\lambda) >0, \quad \mathcal{L}(\lambda) = \sum_{k \geq 1} \mathbf{E}\left[  \e^{- \lambda ( \eta_1 + \cdots + \eta_k )}   \right] = \frac{1}{1 - \mathbf{E}\left[ \e^{ - \lambda \eta_1 } \right]} - 1. \]
From Assumption \eqref{eq:AssumptionContinuous0}, we deduce that 
\[  \mathbf{E}\left[ \e^{ - \lambda \eta_1 } \right] = \int_0^{+\infty} \frac{s}{s+\lambda} \mathrm{d}\mu(s) . \]
Hence, for all $\lambda \in \mathbf{C}$ with positive real part,
\[  \mathcal{L}(\lambda) = \frac{1}{\int_0^{ + \infty } \frac{\lambda}{s + \lambda} \mathrm{d}\mu(s) } -1 = \frac{1}{\lambda s_{\mu}(-\lambda) } -1. \]
Since the Stieltjes transform of $\mu$ is defined on $\mathbf{C} \setminus \mathbf{R}_+$, the function $\mathcal{L}$ can be analytically extended to $\mathbf{C} \setminus \mathbf{R}_+$.
Since for all $s >0$, the homography $\lambda \mapsto \lambda / ( \lambda+s)$ preserves the lower half-plane, the function $ \lambda \mapsto \mathcal{L}(-\lambda)$ preserves the upper half-plane. Moreover, when $|\lambda| \rightarrow + \infty$, 
\[  \mathcal{L}(-\lambda) = \frac{\int_0^{ + \infty}  \frac{s}{s - \lambda} \mathrm{d}\mu(s)}{ \int_0^{ + \infty}  \frac{-\lambda}{s - \lambda} \mathrm{d}\mu(s)} \sim \frac{-1}{\lambda} \int_0^{+\infty} s \mathrm{d}\mu(s).  \] 
Therefore, using the characterization of Nevanlinna functions which are Stieltjes transforms of positive measures \cite{Akhieezr1965} (page 93), there exists a positive measure $\nu$ with total mass $\int_0^{+\infty} s \mathrm{d}\mu(s)$ whenever this integral is finite, such that
\begin{equation}\label{eq:StieltRepresContinuous}
\mathcal{L}(-\lambda) = s_\nu(\lambda).
\end{equation}

\end{proof}

\section{Application to polymers pinned on a defect line}
In this section, we first recall the definition and main results about a classical random polymer model associated to a renewal process, as presented in the book of Giacomin \cite{Giambattista2007}. Then, we present our contribution which consists in an explicit integral representation of the partition functions of the model, when the inter-arrival time distribution of the underlying renewal process is a mixture of Geometric laws.

\subsection{Definition of the model}
Let $\beta \in \mathbf{R}$ and $N \geq 1$. The polymer model associated to $K$ with parameter $\beta$ is defined by the following probability measure $\mathbf{P}_{N,\beta}$ on subsets of $\{0, \ldots, N\}$ whose density with respect to $\mathbf{P}$ is:
\begin{equation*}
\frac{\mathrm{d}\mathbf{P}_{N,\beta}}{\mathrm{d}\mathbf{P}}( \tau ) := \frac{1}{Z_{N,\beta}} \exp\left( \beta \mathcal{N}_N(\tau) \right) \mathbf{1}_{ N \in \tau},
\end{equation*}
where $\mathcal{N}_N(\tau)  = |\{1, \ldots, N\} \cap \tau |$ and
\[ Z_{N,\beta} := E_\mathbf{P} \left[ \exp \left( \beta \mathcal{N}_N(\tau)   \right) \mathbf{1}_{ N \in \tau}  \right].  \]

The renormalization constant $Z_{N,\beta}$ is called the partition function and captures many information on the model. For example, 
\begin{equation*} 
\frac{1}{N} \frac{\partial}{\partial \beta} \log Z_{N, \beta} = E_{\mathbf{P}_{N,\beta}} \left[ \frac{\mathcal{N}_N(\tau)}{N}  \right]  
\end{equation*}
is the average time spent at $0$ by the polymer. As $N$ tends to infinity, it converges to the derivative of the so-called free energy of the model, which is defined by:
\begin{equation}\label{eq:defnFreeEn}
F(\beta) := \lim\limits_{N \rightarrow + \infty} \frac{1}{N} \log Z_{N,\beta}.
\end{equation}
Hence, $F'(\beta)$ corresponds to the asymptotic fraction of time spent at zero by the polymer. Therefore, we speak about a {\it delocalized} regime when $F'(\beta)=0$ and about a {\it localized} regime when $F'(\beta)>0$.

It has been shown that there exists a phase transition for this model. More precisely, let $\beta_c :=  - \log ( 1 - K(\{\infty\}) )$. Then, if $\beta > \beta_c$, the free energy $F(\beta)$ is uniquely determined by
\begin{equation}\label{eq:DefnForF}
E_K \left[  \exp \left( - F(\beta) \tau_1  \right)  \right] = \exp\left(- \beta  \right).
\end{equation}
Otherwise, if $\beta \leq \beta_c$, the free energy is given by $F(\beta)=0$. Therefore, the model exhibits a phase transition at $\beta = \beta_c$ from a delocalized regime to a localized regime. 

The identification of the free energy is based on a simple rewriting of the partition function that we recall here. First, let us introduce a new family of (sub)-probability measures:
\begin{equation}\label{eq:DefnKtilde}
\forall n \geq 1, \quad \widetilde{K}_\beta(n) := 
\left\{
\begin{array}{lr}
\exp(\beta) K(n) \exp(-F(\beta)n) & \text{if $\beta \geq \beta_c $,} \\
\exp(\beta) K(n) & \text{if $\beta < \beta_c$}.
\end{array}
\right.
\end{equation}
Notice that from the definition $\beta_c$, the measure $\widetilde{K}_\beta$ is a probability measure when $\beta > \beta_c$, whereas it is a sub-probability measure when $\beta < \beta_c$. Let $\widetilde{\mathbf{P}}_\beta$ be the law of the renewal process associated to $\widetilde{K}_\beta$. Then, summing over the inter-arrival times leads to:
\begin{align}
Z_{N,\beta}
&= \sum\limits_{n=1}^N \sum\limits_{ l_1 + \cdots + l_n = N} \prod\limits_{i=1}^n \exp(\beta) K(l_i) \notag \\
&= \exp( F(\beta) N ) \sum\limits_{n=1}^N \sum\limits_{l_1 + \ldots + l_n = N} \prod\limits_{i=1}^n \widetilde{K}_\beta(l_i) \notag \\
&= \exp( F(\beta) N ) \widetilde{\mathbf{P}}_\beta(N \in \tau) \label{eq:FormulaFreeEn}.
\end{align}
Under the classical assumption that there exists $\alpha>0$ and a slowly varying function such that
\begin{equation}\label{eq:Assump}
K(n) = \frac{L(n)}{n^{1+\alpha}},
\end{equation}
the probability $\widetilde{\mathbf{P}}_\beta(N \in \tau)$ does not vanish exponentially fast. Therefore, the function $F$ defined in \eqref{eq:DefnForF} is indeed the free energy of the model. Moreover, thanks to the asymptotic theory of renewal processes, Equation \eqref{eq:FormulaFreeEn} also allows to obtain the asymptotic leading term of $Z_{N,\beta}$ as $N \rightarrow + \infty$.

\subsection{Moment representation of the partition function}
When the inter-arrival time distribution of the underlying renewal process is a mixture of Geometric laws, the partition function $Z_{N,\beta}$ is the $N$-th moment of some measure $\nu_\beta$ which corresponds to an explicit transformation of the mixture distribution. 
\begin{theorem}\label{th:MomentRepresentation}
Let $\mu$ be a probability measure on $[0,1]$ and suppose that, for all $n \geq 1$, 
\begin{equation}\label{eq:Assumption}
\begin{array}{l}
K(n) = \int_0^1 (1-x)^{n-1} x \mathrm{d}\mu(x),\\
K(\{\infty\}) = \mu(\{0\}).
\end{array}
\end{equation}
Then, for all $\beta \in \mathbf{R}$, there exists a probability measure $\nu_\beta$ such that:
\begin{equation}\label{eq:Subordination}
s_{\nu_\beta}(z) \left( \e^\beta s_\mu(1-z) - \frac{1 - \e^\beta}{1-z}  \right) = \frac{1}{z(1-z)}.
\end{equation}
Moreover, for all $N \geq 0$:
\begin{equation}\label{eq:PartFunctionRepres}
Z_{N,\beta} = \int_\mathbf{R} x^N \mathrm{d}\nu_\beta(x).
\end{equation}
\end{theorem}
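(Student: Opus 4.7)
The plan is to mirror the generating-series argument used in the proof of Theorem~\ref{th:Renewal}, applied this time to the tilted quantity $Z_{N,\beta}$. Conditioning on the value of $\tau_1$, I would first derive the renewal equation
\[ Z_{N,\beta} = \e^{\beta} \sum_{k=1}^N K(k)\, Z_{N-k,\beta}, \qquad N \geq 1, \]
with the convention $Z_{0,\beta}=1$, which yields the closed form
\[ G_\beta(z) := \sum_{N \geq 0} Z_{N,\beta}\, z^N = \frac{1}{1 - \e^{\beta}\sum_{n\geq 1} K(n)\, z^n} \]
as an analytic identity in a neighborhood of $z=0$. Inserting the mixture formula \eqref{eq:Assumption} and swapping sum and integral by Fubini, $\sum_{n \geq 1} K(n) z^n = \int_0^1 \frac{xz}{1-(1-x)z}\,\mathrm{d}\mu(x)$.

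Second, I would evaluate $-\frac{1}{z}G_\beta(1/z)$. Using the elementary identity $\int_0^1 \frac{x\,\mathrm{d}\mu(x)}{x + (z-1)} = 1 + (1-z)s_\mu(1-z)$ (which comes directly from $\int \mathrm{d}\mu = 1$ and the definition of $s_\mu$), a short algebraic rearrangement produces
\[ -\frac{1}{z}G_\beta(1/z) = \frac{1}{z\bigl[(\e^{\beta}-1) + \e^{\beta}(1-z)\, s_\mu(1-z)\bigr]}, \]
which is exactly the value of $s_{\nu_\beta}(z)$ dictated by \eqref{eq:Subordination}. It then suffices to identify this meromorphic function as the Stieltjes transform of a probability measure on $\mathbf{R}$.

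For the Nevanlinna verification, the argument of Theorem~\ref{th:Renewal} can be reused almost verbatim: the real Möbius map $z \mapsto \frac{zx}{1-z-x}$ has positive determinant $x(1-x)$ and therefore preserves $\mathbf{C}_+$, so $\int_0^1 \frac{zx}{1-z-x}\,\mathrm{d}\mu(x)$ lies in $\overline{\mathbf{C}_+}$. Since $\e^{\beta}>0$ and $-z$ itself has strictly negative imaginary part on $\mathbf{C}_+$, the denominator $-z - \e^{\beta}\int_0^1 \frac{zx}{1-z-x}\,\mathrm{d}\mu(x)$ maps $\mathbf{C}_+$ into $\mathbf{C}_-$, so $-\frac{1}{z}G_\beta(1/z)$ preserves $\mathbf{C}_+$. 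Combined with the asymptotics $-\frac{1}{z}G_\beta(1/z) \sim -1/z$ as $|z|\to\infty$ (which follows from $Z_{0,\beta}=1$), the Nevanlinna--Stieltjes characterization invoked in the proof of Theorem~\ref{th:Renewal} produces a probability measure $\nu_\beta$ on $\mathbf{R}$ with $s_{\nu_\beta}(z) = -\frac{1}{z}G_\beta(1/z)$, which is \eqref{eq:Subordination}. Matching the $z^{-N-1}$ coefficients in the Laurent expansions of both sides at infinity then yields \eqref{eq:PartFunctionRepres}.

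The main place I would watch carefully is precisely this Nevanlinna step, since a priori introducing the Boltzmann tilt $\e^{\beta}$ could spoil the sign structure that underlies the preservation of $\mathbf{C}_+$ in Theorem~\ref{th:Renewal}. What saves the day is simply that $\e^{\beta}>0$ for every real $\beta$, which is what makes the representation \eqref{eq:PartFunctionRepres} valid uniformly across both the localized and the delocalized regime, and in particular avoids any need to split cases according to whether $\beta\gtrless \beta_c$.
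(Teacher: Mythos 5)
Your proof is correct and its analytic core — obtaining the closed form $G_\beta(z) = \bigl(1 - \e^\beta\sum_{n\geq 1} K(n)z^n\bigr)^{-1}$, rewriting $-z^{-1}G_\beta(1/z)$ via the mixture formula so that \eqref{eq:Subordination} falls out algebraically, and checking the Nevanlinna conditions through the positive-determinant M\"obius maps $z\mapsto zx/(1-z-x)$ together with $\e^\beta>0$ — is exactly the paper's. The one point where you genuinely diverge is the derivation of that closed form: the paper first routes through the tilted law $\widetilde{K}_\beta$ of \eqref{eq:DefnKtilde} and the identity $Z_{N,\beta} = \e^{F(\beta)N}\widetilde{\mathbf{P}}_\beta(N\in\tau)$ from \eqref{eq:FormulaFreeEn}, then unwinds $\widetilde{K}_\beta$ to land on $G(z) = \bigl(1-\e^\beta E_{\mathbf{P}}[z^{\tau_1}]\bigr)^{-1}$, whereas you obtain the same expression directly from the renewal recursion $Z_{N,\beta} = \e^\beta\sum_{k=1}^N K(k)\,Z_{N-k,\beta}$ with $Z_{0,\beta}=1$. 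Your route is more economical — it needs no reference to the free energy $F(\beta)$ or to whether $\widetilde{K}_\beta$ is a probability or sub-probability measure, so there is indeed no case split at $\beta_c$ — while the paper's route has the side benefit of making the reduction \eqref{eq:FormulaFreeEn}, which is reused elsewhere, visible inside this proof. Both correctly deliver the same $S(z)$, and your emphasis that the sign structure is preserved solely because $\e^\beta>0$ is exactly the right thing to single out.
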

\begin{proof}
Let $\beta \in \mathbf{R}$ and define, for all complex number in the upper half-plane $z \in \mathbf{C}_+$,  $z_\beta = z \exp( F(\beta) )$. Thanks to Equation \eqref{eq:FormulaFreeEn}, the generating function associated to the sequence $(Z_{N,\beta})_{N\geq 0}$ is equal to:
\begin{align} 
G(z) 
:&= \sum_{N \geq 0}Z_{N,\beta} z^N \notag \\
&= 1 + \sum\limits_{N \geq 1} \widetilde{\mathbf{P}}_\beta(N \in \tau) z_\beta^N \notag \\
&= 1 + E_{ \widetilde{\mathbf{P}}_\beta } \left[ \sum\limits_{ N \geq 1} \mathbf{1}_{N \in \tau} z_\beta^N  \right]. \label{eq:GenFunct1}
\end{align} 
Let $\big(\widetilde{\eta}^{(\beta)}_i \big)_{i\geq 1}$ be a sequence of i.i.d. random variables with law $\widetilde{K}_\beta$. Then, Equation \eqref{eq:GenFunct1} becomes:
\begin{equation*}
G(z) = 1 + E_{\widetilde{\PP}_{\beta}} \left[ \sum\limits_{ k \geq 1}  z_\beta^{\widetilde{\eta}^{(\beta)}_1 + \cdots + \widetilde{\eta}^{(\beta)}_k} \right] = \frac{1}{1- \exp(\beta) E_{\widetilde{\PP}_{\beta}} \left[z_\beta^{\widetilde{\eta}^{(\beta)}_1}\right] }.
\end{equation*}
Finally, from the definition of $\widetilde{K}_\beta$ given in \eqref{eq:DefnKtilde}, we obtain:
\begin{equation*}
G(z) = \frac{1}{1 - \exp(\beta) E_{\mathbf{P}} \left[  z^{\tau_1} \right]}.
\end{equation*}
Let $S(z) = -\frac{1}{z}G\left( \frac{1}{z} \right)$. Then, using Hypothesis \eqref{eq:Assumption}, it is easy to deduce that:
\begin{equation} \label{eq:SubordinationAtom}
S(z) = \frac{1}{- z - \exp(\beta) \int_0^1 \frac{zx}{1-z-x} \mathrm{d}\mu(x)}.
\end{equation}
Notice that $S(z)\sim -\frac{1}{z}$ as $|z| \rightarrow +\infty$. Moreover, since for all $z \in \mathbf{C}_+$ and $x \in (0,1)$, $\frac{zx}{1-z-x} \in \mathbf{C}_+$, the following inclusion holds: $S(\mathbf{C}_+) \subset \mathbf{C}_+$. By \cite{Akhieezr1965} (page 93), these two properties imply that there exists a probability measure $\nu_\beta$ such that:
\begin{equation*}
S(z) = \int_\mathbf{R} \frac{\mathrm{d}\nu_\beta(x)}{x-z},
\end{equation*}
which ends the proof of Theorem \ref{th:MomentRepresentation}.
\end{proof}

\begin{remark}\label{remark:atom}
Note that by \eqref{eq:SubordinationAtom}, the Stieltjes transform of $\nu_{\beta}$ is also given by
\[  s_{\nu_\beta}(z) =  \frac{1}{- z - \exp(\beta) \int_0^1 \frac{zx}{1-z-x} \mathrm{d}\mu(x)}. \]
Observe that the function $y \mapsto \int_0^1 \frac{x}{1-y-x} \mathrm{d}\mu(x)$ is increasing on $(1, +\infty)$ and tends to $- \mu((0,1]) = - \sum_{n \geq 1} K(n) = - (1 - K(\{\infty\}))$ as $y \rightarrow 1^+$. Therefore, the denominator of $s_{\nu_\beta}$ has a single root $x_\beta$ on $(1, +\infty)$, which implies that the restriction of $\nu_\beta$ on $(1, +\infty)$ consists in a unique atom at $x_\beta >1$ if $\beta > - \log( 1 - K(\{\infty\}))$ and is null otherwise. By Equation \eqref{eq:PartFunctionRepres}, $F(\beta) = \log(x_\beta)$ if $\beta > - \log( 1 - K(\{\infty\}))$, and $F(\beta)=0$ otherwise.

The mass of the atom of $\nu_\beta$ at $\exp(F(\beta))$ is given by the residue of $s_{\nu_\beta}$ at $\exp ( F(\beta) )$ which happens to be equal to $F'(\beta)$, which is the average time spent at zero by the polymer. In particular, as $N$ tends to infinity,
\[  Z_{N,\beta} \sim  F'(\beta) \exp( N F(\beta) ).  \]
This formula was already proved in a general framework, see for example Chapter 2 of \cite{Giambattista2007}.

%

\end{remark}

\subsection{Correlation length}
In this section, we are interested, for all $b>0$, in the renewal process $\tau(b)$ with inter-arrival distribution defined by
\[   K_b(n) = \frac{1}{c(b)} K(n) \exp(-nb),  \]
and with mean value denoted by $m_{K_b}$. Namely, we want to compute the rate of convergence in the renewal theorem associated to this process. It is motivated by the article of Giacomin \cite{giacomin2008renewal}, which makes the link between this quantity and the correlation decay for the polymer with law $\mathbf{P}_{N,\beta}$, where $\beta>0$ corresponds to the only positive solution of the equation $b=F(\beta)$, in the limit $N \rightarrow +\infty$.

\begin{prop} \label{prop:propcorr}
Suppose that $\mu$ is a probability measure on $[0,1]$ and that 
\begin{equation}
\begin{array}{l}
K(n) = \int_0^1 (1-x)^{n-1} x \mathrm{d}\mu(x),\\
K(\{\infty\}) = \mu(\{0\}).
\end{array}
\end{equation}
Then, for all $b>0$, there exists a probability measure $\nu_b$ on $[0,1]$ such that for all $N \geq 0$,
\begin{equation} \label{eq:propcorr}
 \mathbf{P}(N \in \tau(b) ) = \int_0^1 x^N \mathrm{d}\nu_b(x).
\end{equation}
Moreover, if $\mu$ has a positive density on a neighborhood of $0$ and that
\begin{equation}\label{eq:correlation}
\lim\limits_{ N \rightarrow +\infty} \frac{1}{N} \log \left( \mathbf{P}(N \in \tau(b) ) - \frac{1}{m_{K_b}}  \right) = -b.
\end{equation}
\end{prop}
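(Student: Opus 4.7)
The plan is to apply Theorem \ref{th:Renewal} to the tilted inter-arrival law $K_b$, localise the support of the resulting $\nu_b$ inside $[0,e^{-b}]\cup\{1\}$, and then extract the exponential rate from a local analysis of $\nu_b$ near its right edge below $1$.

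First, I would show that $K_b$ is again a geometric mixture in the sense of \eqref{eq:Assumption0}. The change of variable $y=1-e^{-b}(1-x)$ pushes $\mu$ forward to a probability $\hat\mu$ on $[1-e^{-b},1]$, and combined with the identity $(1-x)^{n-1}e^{-nb}=e^{-b}(1-y)^{n-1}$ it yields
\[ K_b(n)=\int_0^1(1-y)^{n-1}\,y\,d\mu_b(y), \qquad d\mu_b(y)=\frac{e^{-b}}{c(b)}\,\frac{1-e^{b}(1-y)}{y}\,d\hat\mu(y). \]
The factor $1-e^{b}(1-y)$ is nonnegative on $[1-e^{-b},1]$, so $\mu_b$ is a positive measure on $[1-e^{-b},1]$; the definition $c(b)=\sum_n K(n)e^{-nb}$ forces $\mu_b([0,1])=1$, and $\mu_b(\{0\})=0$ matches $K_b(\{\infty\})=0$. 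Theorem \ref{th:Renewal} then produces the probability measure $\nu_b$ on $[0,1]$ satisfying \eqref{eq:propcorr} together with $s_{\nu_b}(z)\,s_{\mu_b}(1-z)=(z(1-z))^{-1}$.

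Next, I would localise $\operatorname{supp}(\nu_b)$ inside $[0,e^{-b}]\cup\{1\}$. For $z\in(e^{-b},1)$ the argument $1-z$ lies in $(0,1-e^{-b})$, outside $\operatorname{supp}(\mu_b)$, so $s_{\mu_b}(1-z)$ is analytic and strictly positive there (the integrand is positive on the support of $\mu_b$). By the subordination relation, $s_{\nu_b}$ is then analytic and finite on $(e^{-b},1)$, ruling out both a density and atoms of $\nu_b$ on this interval. Applying Proposition \ref{prop:renewalth} to $\tau(b)$ identifies $\nu_b(\{1\})=1/m_{K_b}$, so
\[ \mathbf{P}(N\in\tau(b))-\frac{1}{m_{K_b}}=\int_{[0,e^{-b}]}x^N\,d\nu_b(x)\leq e^{-Nb}, \]
which furnishes $\limsup_N\frac{1}{N}\log(\cdot)\leq -b$ for free.

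The matching lower bound is the main technical point, and it is where the positive density assumption enters. From $f_\mu(0)>0$ a direct computation gives $f_{\mu_b}(y)\sim c_1\bigl(y-(1-e^{-b})\bigr)$ as $y\downarrow 1-e^{-b}$ with $c_1>0$; because this density vanishes only linearly, the principal-value integral defining $H_{\mu_b}(1-e^{-b})$ converges to a strictly positive limit that is continuous across the edge. Plugging these two facts into the local form of the density formula of Corollary \ref{coro:coro1}.(2) (which is nothing but $\frac{1}{\pi}\Im s_{\nu_b}(x+i0^+)$ computed through the subordination relation) one obtains $f_{\nu_b}(x)\sim c_2(e^{-b}-x)$ as $x\uparrow e^{-b}$ with $c_2>0$. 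A Watson-type Laplace estimate then yields
\[ \int_{[0,e^{-b}]}x^N\,d\nu_b(x)\geq c_3\,e^{-Nb}\,N^{-2} \]
for all sufficiently large $N$, whence $\liminf_N\frac{1}{N}\log(\cdot)\geq -b$ and \eqref{eq:correlation} follows.
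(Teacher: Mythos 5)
Your proof is correct, and it takes a genuinely different route from the paper's. The paper expresses $G_b(z)$ through the polymer parameter $\beta$ with $F(\beta)=b$ (so $c(b)=e^{-\beta}$), arrives at formula \eqref{eq:nu_b} for $s_{\nu_b}$ in terms of $s_\mu$, and then reads off the support and density properties of $\nu_b$ directly from that formula (monotonicity of the denominator on $(e^{-b},1)$, vanishing of $\Im s_{\nu_b}$ there, positivity of $\Im s_{\nu_b}$ just below $e^{-b}$). You instead make the structural observation that $K_b$ is \emph{again} a geometric mixture, with a new mixing measure $\mu_b$ supported in $[1-e^{-b},1]$ obtained by the affine pushforward $y=1-e^{-b}(1-x)$ followed by reweighting; this lets you invoke Theorem~\ref{th:Renewal} verbatim and get the subordination relation $s_{\nu_b}(z)s_{\mu_b}(1-z)=(z(1-z))^{-1}$, from which the localisation of $\nu_b$ to $[0,e^{-b}]\cup\{1\}$ is immediate: for $z\in(e^{-b},1)$, $1-z$ lies strictly below $\operatorname{supp}(\mu_b)$, so $s_{\mu_b}(1-z)$ is real, positive, and analytic, hence so is $s_{\nu_b}(z)$. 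This is arguably cleaner than the paper's case analysis, avoids invoking the polymer machinery and the implicit parameter $\beta$, and makes the mechanism for the rate $-b$ transparent. Your Watson--Laplace lower bound is also slightly sharper than what the paper records (the paper only needs a positive density on a left-neighbourhood of $e^{-b}$, giving $\liminf\geq\log(e^{-b}-\epsilon)$ for every $\epsilon$, which already suffices). One small over-reach: ``positive density on a neighbourhood of $0$'' does not literally guarantee that the limit $f_\mu(0^+)$ exists, which your asymptotic $f_{\mu_b}(y)\sim c_1\bigl(y-(1-e^{-b})\bigr)$ implicitly assumes; but the lower bound survives without this refinement, since it only needs $f_{\nu_b}>0$ a.e.\ on some $(e^{-b}-\epsilon,e^{-b})$, which follows from $f_{\mu_b}>0$ there via the subordination formula for the density.
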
 

\begin{remark}
Notice that Equation \eqref{eq:correlation} is valid for all $b>0$, whereas the general result of Giacomin \cite[Theorem 1.1]{giacomin2008renewal} establishes this formula only for parameters $b>0$ up to some positive and implicit value $b_0>0$.
\end{remark}

\begin{remark}
Suppose that there exists $a \in (0,1)$ such that $\mu([0,a))=0$ and such that $\mu$ has a positive density at $a^+$. Then, the limit \eqref{eq:correlation} is equal to $-b + \log(1-a)$.
\end{remark}

\begin{proof}[Proof of Proposition \ref{prop:propcorr}.]
We proceed like in the proof of Theorem \ref{th:Renewal}. Namely, denoting by $G_b(z) = \sum_{N \geq 0} \mathbf{P}(N \in \tau(b)) z^N$ the generating series of the renewal probabilities, and introducing $\beta>0$ to be the only solution of the equation $b=F(\beta)$, a direct computation leads to:
\[ G_b(z) = \frac{1}{1 - \e^{\beta}z \mathbf{E}\left[ (z\e^{-b})^{\tau_1}  \right]}.  \]
\begin{align*}
-\frac{1}{z}G_b \left( \frac{1}{z} \right) 
&= \frac{-1}{-z - \e^{\beta-b} (\e^bz)  \mathbf{E}\left[ \left(\frac{1}{z\e^{b}}\right)^{\tau_1}  \right]  } \\
&= \frac{-1}{-z - \e^{\beta-b}  \int_0^1 \frac{z e^b x}{1-z \e^b-x} \mathrm{d}\mu(x)  }  \\
&= \frac{-1}{-z - \e^{\beta}  \int_0^1 \frac{z x}{1-z \e^b-x} \mathrm{d}\mu(x)  } \\
&= \frac{1}{-z + \e^\beta z + z \e^\beta (1 - z \e^b ) s_\mu( 1 - z \e^b  )}.
\end{align*}
Using the penultimate equality, one can check that the analytic function $z \mapsto - \frac{1}{z} G_b ( \frac{1}{z} ) $ preserves the upper half-plane. Moreover, $- \frac{1}{z} G_b ( \frac{1}{z} ) \sim - 1 /z$ as $|z| \rightarrow +\infty$. Therefore, by \cite{Akhieezr1965} (page 93), there exists a probability measure $\nu_b$ such that
\begin{equation}  \label{eq:nu_b}
s_{\nu_b}(z) =  \frac{1}{-z + \e^\beta z + z \e^\beta(1 - z \e^b ) s_\mu( 1 - z \e^b  )}.
\end{equation}
This yields Equation \eqref{eq:propcorr}.

Notice that, by a monotonicity argument, the denominator of $s_{\nu_b}$ vanishes only at $z=0$ and $z=1$. Moreover, as in Proposition \eqref{prop:renewalth}, one can prove that $\nu_b(\{1\}) = 1 / m_{K_b}$. Finally, the probability measure $\nu_b$ has a positive density at $x$ if and only if $\lim_{t \rightarrow 0^+} \Im s_{\nu_b}(x+it) >0$, which, by Equation \eqref{eq:nu_b}, is the case if and only if $\lim_{t \rightarrow 0^+} \Im s_{\mu}(1 - (x+it)\e^b) >0$. Since by assumption, the probability measure $\mu$ has a density in a neighborhood of $0^+$, we deduce that $\nu_b$ has a density in a neighborhood of $(\e^{-b})^-$. Moreover, by Equation \eqref{eq:nu_b}, for all $x \in (\e^{-b},1)$:
\begin{itemize}
\item $\lim_{t \rightarrow 0^+} |  s_{\nu_b}(x+it)  | < +\infty  $,
\item $\lim_{t \rightarrow 0^+} \Im ( s_{\nu_b}(x+it) ) = 0 $.
\end{itemize}
From the first fact (see formula (1.2.10) in \cite{simon2005orthogonal}), we deduce that $\nu_b$ has no singular part on $(\e^{-b},1)$. On the other hand, the second fact implies that $\nu_b$ has no absolutely continuous part either. Hence $\nu$ does not charge the interval $(\e^{-b},1)$.
This yields \eqref{eq:correlation}.

\end{proof}

\section{Computations in the case of generalized Arcsine laws}  \label{sec:computations}
It turns out that some particular choices of probability measure $\mu$ in Theorem \ref{th:MomentRepresentation} yield explicit computations. More precisely, for all $v \in (0,1)$, let $\mu_v$ be the Beta distribution with parameters $(1-v, v)$, defined by:
\begin{equation*}
\mathrm{d} \mu_v(x) := \frac{\sin(\pi v)}{\pi}x^{-v} (1-x)^{v-1}  \mathbf{1}_{x \in [0,1]} \mathrm{d}x.
\end{equation*}
Let $K_v$ be the probability measure on $\mathbf{N}$ associated to $\mu_v$, that is: 
\begin{equation*}
\forall n \geq 1, \quad K_v(n) = \int_\mathbf{R} (1-x)^{n-1} x \mathrm{d}\mu_v(x) = \frac{\sin(\pi v)}{\pi} \frac{\Gamma(n+v-1) \Gamma(2-v)}{\Gamma(n+1)}.
\end{equation*}
As $n \rightarrow + \infty$, $K_v(n) \sim \frac{\sin(\pi v) \Gamma(2-v)}{\pi} \frac{1}{n^{2-v}}$. Hence, the probability measures $K_v$'s satisfy \eqref{eq:Assump}.

Denoting $Z_{N,\beta,v}$ the partition function of the polymer associated to $K_v$, Theorem \ref{th:MomentRepresentation} translates into the following result.

\begin{prop}\label{th:ExplicitComp}
For all $\beta \in \mathbf{R}$ and $v \in (0,1)$, define:
\[ f_{v,\beta}(x) = \frac{  \sin(\pi v)}{\pi x} \frac{\e^\beta x^{1-v} (1-x)^{1-v} }{ (1- \e^\beta)^2 x^{2(1-v)} - 2 \e^\beta (1-\e^\beta) \cos(\pi v) x^{1-v} (1-x)^{1-v} + \e^{2\beta}(1-x)^{2(1-v)} }. \]
Moreover, for all $\beta>0$, define:
\[ \gamma_{v,\beta} = (1-\e^{-\beta})^{\frac{1}{1-v}}, \quad \quad  x_{v,\beta} = \frac{1}{1-\gamma_{v,\beta}}  \quad \text{and} \quad  c_{v,\beta} = \frac{\exp(-\beta)}{1-v} \frac{\gamma_{v,\beta}^v}{1-\gamma_{v,\beta}}.  \]
For all $\beta \in \mathbf{R}$, let $\nu_{v,\beta}$ be the following probability measure:
\begin{align*}
\mathrm{d} \nu_{v,\beta} & (x) =  f_{v,\beta}(x) \mathbf{1}_{x \in (0,1)} \mathrm{d}x + \mathbf{1}_{\beta>0} c_{v,\beta} \mathrm{d} \delta_{x_{v,\beta}}(x).
\end{align*}
Then, for all $N \geq 0$,
\begin{equation*}
Z_{N,\beta,v} = \int_\mathbf{R} x^N \mathrm{d}\nu_{v,\beta}(x).
\end{equation*}
\end{prop}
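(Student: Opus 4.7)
By Theorem~\ref{th:MomentRepresentation} applied to $\mu=\mu_v$ (which satisfies~\eqref{eq:Assumption} since $\mu_v(\{0\})=0=K_v(\{\infty\})$), there exists a probability measure $\nu_{v,\beta}$ with moments $Z_{N,\beta,v}$ whose Stieltjes transform satisfies~\eqref{eq:Subordination}. The plan is to substitute the explicit Stieltjes transform of $\mu_v$ (recorded in the remark following Theorem~\ref{th:Renewal}) into~\eqref{eq:Subordination} and recover $\nu_{v,\beta}$ by Stieltjes inversion. Using the identity $(1-z)\,s_{\mu_v}(1-z)=-((z-1)/z)^{1-v}$ with the principal branch on $\mathbf{C}_+$, this leads to the closed form
\[
s_{\nu_{v,\beta}}(z) \;=\; \frac{1}{z(e^\beta-1)-e^\beta\,z^{v}(z-1)^{1-v}}, \qquad z\in\mathbf{C}_+.
\]

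I would then extract the absolutely continuous part by computing $\pi^{-1}\Im\,s_{\nu_{v,\beta}}(x+i0^+)$ for $x\in(0,1)$. Since $z-1$ approaches the negative real axis from above as $t\to 0^+$, the principal branch of $(z-1)^{1-v}$ picks up the phase $e^{i\pi(1-v)}$, so the boundary value of the denominator takes the shape $A\cos(\pi v)+B-iA\sin(\pi v)$ with $A:=e^\beta x^{v}(1-x)^{1-v}$ and $B:=x(e^\beta-1)$. Taking the imaginary part of the reciprocal and expanding the squared modulus gives
\[
\frac{1}{\pi}\lim_{t\to 0^+}\Im\,s_{\nu_{v,\beta}}(x+it) \;=\; \frac{\sin(\pi v)}{\pi}\,\frac{A}{A^{2}+B^{2}+2AB\cos(\pi v)},
\]
and a short manipulation (dividing numerator and denominator by $x^{2v}$) identifies this with the $f_{v,\beta}(x)$ of the statement.

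To locate the atoms, I would analyze $s_{\nu_{v,\beta}}$ on the real line outside $[0,1]$, where the displayed denominator is real. A sign analysis rules out zeros on $(-\infty,0)$, while the singularities at $z=0$ and $z=1$ turn out to be of too low order to carry mass. For $z>1$ and $\beta>0$, the denominator vanishes precisely when $((z-1)/z)^{1-v}=1-e^{-\beta}$, whose unique real solution is $x_{v,\beta}=1/(1-\gamma_{v,\beta})$. Differentiating the denominator and simplifying with $\gamma_{v,\beta}^{1-v}=1-e^{-\beta}$, the residue of $s_{\nu_{v,\beta}}$ at $x_{v,\beta}$ equals $-c_{v,\beta}$, so the atom carries mass $c_{v,\beta}$ as claimed. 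The main technical obstacle lies in coherently tracking the branch of $(z-1)^{1-v}$ on the three regions $\{z\in(0,1)\}$, $\{z>1\}$, and $\mathbf{C}_+$; once that bookkeeping is settled, both the density and the atomic mass fall out of routine algebra, and the moment representation~\eqref{eq:PartFunctionRepres} follows directly from Theorem~\ref{th:MomentRepresentation}.
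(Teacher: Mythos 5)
Your proposal is correct and follows essentially the paper's route: substitute the explicit Stieltjes transform of $\mu_v$ into the subordination relation \eqref{eq:Subordination}, read off the absolutely continuous part of $\nu_{v,\beta}$ from $\pi^{-1}\Im s_{\nu_{v,\beta}}(x+i0^+)$, and identify the unique atom outside $[0,1]$ with its mass. Two small remarks on the differences. First, the paper obtains the boundary values of $s_{\mu_v}$ from the tabulated Cauchy principal value (Bateman), while you use the closed form $s_{\mu_v}(z)=\tfrac{1}{1-z}\bigl(\tfrac{z}{1-z}\bigr)^{-v}$ from the remark after Theorem~\ref{th:Renewal} and track branches by hand; both yield the same boundary data $B+A\cos(\pi v)-iA\sin(\pi v)$ with $A=e^\beta x^v(1-x)^{1-v}$, $B=x(e^\beta-1)$, and the resulting density matches $f_{v,\beta}$ after dividing through by $x^{2v}$ exactly as you indicate. (Your intermediate identity $(1-z)s_{\mu_v}(1-z)=-((z-1)/z)^{1-v}$ is stated loosely as a principal-branch identity on all of $\mathbf{C}_+$, which is not literally true; but since you recompute the correct phase $e^{i\pi(1-v)}$ at the boundary, the final formulas are unaffected.) Second, for the mass of the atom the paper invokes Remark~\ref{remark:atom} to write it as $F'(\beta)=\partial_\beta(x_{v,\beta})/x_{v,\beta}$, whereas you differentiate the denominator and compute the residue $1/D'(x_{v,\beta})=-c_{v,\beta}$ directly; this is arguably a bit more self-contained but is the same computation in a different guise.
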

\begin{remark}
Using Remark \ref{remark:atom}, we deduce that the constant $ c_{v,\beta}$ is a positive constant smaller than 1 which is equal to the asymptotic average time spent at 0 as $N$ tends to infinity. 
\end{remark}

\begin{proof}
Let $v \in (0,1)$ and $\beta \in \mathbf{R}$. Recall that from Equation \eqref{eq:Subordination}, the Stieltjes transform of $\nu_{v,\beta}$ satisfies:
\begin{equation}\label{eq:SubordinationExpl}
\forall z \in \mathbf{C}_+, \quad \frac{1}{s_{\nu_{v,\beta}}(z)} = z (1-z) \left( \e^\beta s_{\mu_v}(1-z) - \frac{1 - \e^\beta}{1-z}  \right).
\end{equation}
For all $x \in \mathbf{R}$, let us define $s_{\nu_{v,\beta}}(x) := \lim_{t \rightarrow 0^+} s_{\nu_{v,\beta}}(x+it)$ and $s_{\mu_v}(x) := \lim_{t \rightarrow 0^+} s_{\mu_v}(x-it)$. Then, Equation \eqref{eq:SubordinationExpl} becomes:
\begin{equation}\label{eq:StieltonR}
\frac{1}{s_{\nu_{v,\beta}}(x)} = x(1-x) \left( \e^\beta s_{\mu_v}(1-x) - \frac{1 - \e^\beta}{1-x}  \right).
\end{equation}
We now use the following identity:
\begin{equation}\label{eq:LimRealStielt}
s_{\mu_v}(x) = \fint_\mathbf{R} \frac{\mathrm{d} \widetilde{\mu_v}(y)}{y-x}  - i \pi \frac{ \mathrm{d} \widetilde{\mu_v}(x) }{\mathrm{d}x }(x)  ,
\end{equation}
where the integral in the right-hand side stands for a Cauchy principal value. The latter can be explicitly computed (see \cite{bateman1954tables} page 250). More precisely:
\begin{equation*}
\frac{1}{\pi} \fint_\mathbf{R} \frac{y^{-v}(1-y)^{v-1}  }{y-x} \mathrm{d}y  = 
\left\{
\begin{array}{lr}
     \frac{1}{\sin( \pi (1-v))} \frac{1}{1-x}  \left| \frac{x}{1-x} \right|^{-v}   & \text{if $x<0$ or $x>1$}, \\
    -x^{-v} (1-x)^{v-1} \frac{\cos( \pi (1-v))}{\sin( \pi (1-v))}      & \text{if $0<x<1$}.
\end{array}
\right.
\end{equation*}
Using \eqref{eq:LimRealStielt}, we deduce that:
\begin{equation}\label{eq:Bateman}
s_{\mu_v}(1-x) = 
\left\{
\begin{array}{lr}
   - \frac{1}{x} \left| \frac{1-x}{x} \right|^{-v}  & \text{if $x<0$ or $x>1$}, \\
    \cos( \pi v) (1-x)^{-v} x^{v-1}   - i \sin(\pi v) x^{-v} (1-x)^{v-1}  & \text{if $0<x<1$}.
\end{array}
\right.
\end{equation}
From \eqref{eq:Bateman} and \eqref{eq:StieltonR}, it is possible to identify the measure $\nu_{v,\beta}$ as explained in the following.

First, the absolutely continuous part of $\nu_{v,\beta}$ is given by $\frac{1}{\pi} \Im s_{\nu_{v,\beta}}(x)$. Therefore, it is supported on the interval $(0,1)$ and given by:

\begin{align*}
\frac{\mathrm{d}\nu_{v,\beta}}{\mathrm{d}x}(x) = \frac{  \sin(\pi v)}{\pi x} \frac{\e^\beta x^{1-v} (1-x)^{1-v} }{ (1- \e^\beta)^2 x^{2(1-v)} - 2 \e^\beta (1-\e^\beta)  \cos(\pi v) x^{1-v} (1-x)^{1-v} + \e^{2\beta}(1-x)^{2(1-v)} }.
\end{align*}

Besides, $\nu_{v,\beta}$ has an atom at $x \in \mathbf{R}$ if $s_{\nu_{v,\beta}}(x) = \infty$. Therefore, the atomic part is contained in $\mathbf{R} \setminus [0,1]$ and $x \in \mathbf{R} \setminus[0,1]$ is an atom of $\nu_{v,\beta}$ if and only if:
\[ 1 + \e^\beta \left( \frac{x}{x-1} \right)^{v-1} - \e^\beta = 0 \quad \Longleftrightarrow \quad x = 1 + \frac{(1-\e^{-\beta})^{ \frac{1}{1-v} }}{1 - (1-\e^{-\beta})^{ \frac{1}{1-v} }}. \]
The right-hand side does not belong to $[0,1]$ if and only if $\beta > 0$. Therefore, we have the following dichotomy:
\begin{itemize}
\item if $\beta > 0$, the measure $\nu_{v,\beta}$ has an atom at $x_{v,\beta} := 1 + \frac{(1-\e^{-\beta})^{ \frac{1}{1-v} }}{1 - (1-\e^{-\beta})^{ \frac{1}{1-v} }} > 1$;
\item if $\beta \leq 0$, the measure $\nu_{v,\beta}$ has no atom.
\end{itemize}
Suppose that $\beta > 0$. Then, the atom $x_{v,\beta}$ coincides with $\exp(F(\beta))$ and by Remark \ref{remark:atom} the mass of $\nu_{v,\beta}$ at $x_{v,\beta}$ is equal to $F'(\beta) = \partial_\beta ( x_{v,\beta} ) / x_{v,\beta} $, which yields the expression of $c_{v,\beta}$.
\end{proof}

Then, a straightforward consequence of Proposition \ref{th:ExplicitComp} is the following explicit formula for the free energy of the model, defined in \eqref{eq:defnFreeEn}. 
\begin{corollary}
The following equality holds:
\begin{equation}\label{eq:FreeEnergy}
F_\nu(\beta) =
\left\{
\begin{array}{lr}
0 & \text{if $\beta \leq 0$}, \\
\log\left( \frac{1}{1-\gamma_{v,\beta}} \right) & \text{if $\beta> 0$}.
\end{array}
\right.
\end{equation}
Moreover, when $\beta>0$, as $N \rightarrow + \infty$,
\begin{equation}\label{eq:AsymptoticPartition}
Z_{N,\beta,v} \sim   \frac{\exp(-\beta)}{1-v} \frac{\gamma_{v,\beta}^v}{1-\gamma_{v,\beta}} \left(\frac{1}{1-\gamma_{v,\beta}} \right)^N.
\end{equation}
When $\beta = 0$,

\[  Z_{N,\beta} = \frac{\sin( \pi v)}{\pi} \int_0^1 x^{N-v} (1-x)^{v-1} \mathrm{d}x  =  \frac{\Gamma(N+1-v) }{\Gamma(1-v) \Gamma(N+1)} \sim \frac{N^{-v}}{\Gamma(1-v)}. \]

\end{corollary}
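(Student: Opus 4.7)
\emph{Proof proposal.} The plan is to read off all three assertions from the explicit decomposition
\[
Z_{N,\beta,v} = \int_0^1 x^N f_{v,\beta}(x)\,\mathrm{d}x + \mathbf{1}_{\beta>0}\, c_{v,\beta}\, x_{v,\beta}^N
\]
furnished by Proposition \ref{th:ExplicitComp}, by treating separately the three regimes $\beta>0$, $\beta=0$ and $\beta<0$, which correspond to qualitatively different asymptotics of $Z_{N,\beta,v}$.

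For $\beta>0$, I would observe that the atomic contribution equals $c_{v,\beta}(1-\gamma_{v,\beta})^{-N}$ and thus grows exponentially, whereas the continuous contribution is bounded above by $\int_0^1 f_{v,\beta}(x)\,\mathrm{d}x \le 1$ since $\nu_{v,\beta}$ is a probability measure. The atom therefore dominates, delivering at once the asymptotic equivalent \eqref{eq:AsymptoticPartition} and the free energy $F_\nu(\beta)=\log x_{v,\beta}=\log(1/(1-\gamma_{v,\beta}))$, consistently with Remark \ref{remark:atom}.

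At $\beta=0$, I would substitute $\e^\beta=1$ in $f_{v,\beta}$: the first two summands of the denominator collapse to zero and a short simplification yields $f_{v,0}(x)=\frac{\sin(\pi v)}{\pi}x^{-v}(1-x)^{v-1}$, so in fact $\nu_{v,0}=\mu_v$, reaffirming that the generalised Arcsine laws are fixed points of the involution from Theorem \ref{th:Renewal}. Then $Z_{N,0,v}$ reduces to the Beta integral $\frac{\sin(\pi v)}{\pi}B(N+1-v,v)$; applying the reflection formula $\Gamma(v)\Gamma(1-v)=\pi/\sin(\pi v)$ produces the stated closed form, and Stirling's estimate $\Gamma(N+1-v)/\Gamma(N+1)\sim N^{-v}$ yields the claimed equivalent.

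For $\beta<0$, no atomic term is present, so $\nu_{v,\beta}$ is a probability measure on $[0,1]$ and $Z_{N,\beta,v}\le 1$, giving $\limsup_N \frac{1}{N}\log Z_{N,\beta,v}\le 0$. The only mildly delicate point, and the main obstacle of the argument, is the matching lower bound. For this I would inspect $f_{v,\beta}$ near $x=1$: the denominator tends to $(1-\e^\beta)^2>0$ while the numerator is of order $(1-x)^{1-v}$, so $f_{v,\beta}(x)\sim C(1-x)^{1-v}$ with $C>0$. A standard Watson-type asymptotic $\int_0^1 x^N(1-x)^{1-v}\mathrm{d}x \sim \Gamma(2-v)N^{-(2-v)}$ then shows that $Z_{N,\beta,v}$ decays only polynomially, so $\liminf_N \frac{1}{N}\log Z_{N,\beta,v}\ge 0$ and $F_\nu(\beta)=0$. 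Everything else reduces to bookkeeping on the explicit expression for $\nu_{v,\beta}$.
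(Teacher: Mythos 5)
Your proposal is correct. The paper presents this corollary as a direct read-off from Proposition~\ref{th:ExplicitComp} (together with Remark~\ref{remark:atom}, which already identifies $F(\beta)=\log x_\beta$ and gives $Z_{N,\beta}\sim F'(\beta)\e^{NF(\beta)}$), and that is exactly what you do: for $\beta>0$ the atom at $x_{v,\beta}=(1-\gamma_{v,\beta})^{-1}$ dominates the bounded continuous contribution, yielding both \eqref{eq:AsymptoticPartition} and the formula for $F_\nu$; for $\beta=0$ the density collapses to that of $\mu_v$, and the Beta integral plus the reflection formula give the closed form and the $N^{-v}/\Gamma(1-v)$ equivalent. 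Your treatment of $\beta<0$ is slightly more elaborate than strictly necessary — the paper can invoke the general fact $F(\beta)=0$ for $\beta\le\beta_c=0$ — but your self-contained lower bound via the behaviour $f_{v,\beta}(x)\sim C(1-x)^{1-v}$ near $x=1$ and the Beta-function asymptotics $\int_0^1 x^N(1-x)^{1-v}\,\mathrm{d}x\sim\Gamma(2-v)N^{-(2-v)}$ is sound and arguably cleaner, since it never leaves the explicit measure $\nu_{v,\beta}$.
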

\bigskip

\section{An epilogue on random matrix theory}
We end this article by presenting a link between the phase transition of a particular instance of the above pinned model and a famous phase transition in random matrix theory. Since it was the starting point of the present work, we think it justifies its presence here, at least in our view.

When $v=1/2$, $\mu_v$ is the classical Arcsine law:
\[ \mathrm{d} \mu_{\frac{1}{2}}(x) = \frac{1}{\pi} \frac{1}{\sqrt{(1-x)x}} \mathbf{1}_{x \in (0,1)} \mathrm{d}x. \]
In that case,
\[ \forall n \geq 1, \quad K_{\frac{1}{2}}(n) = \frac{1}{2^{2n}} \frac{1}{2n - 1} \binom{2n}{n}, \]
which is also the probability that the first return to $0$ of the simple random walk is equal to $2n$, see for example \cite{FellerVol1}. It turns out that in this setting, the phase transition from the delocalized regime to the localized regime for the polymer model corresponds to a famous phase transition in random matrix theory, which we briefly recall in the following. 

For all $n \geq 1$, let $X_n$ be a matrix of size $n \times n$ whose entries are i.i.d. random variables, centered and reduced. Let also $\Sigma_n = \mathrm{Diag}(2 \e^\beta ,1, \ldots,1)$, where $\beta \in \mathbf{R}$. We consider the following random covariance matrix:
\begin{equation*}  
S_n = \frac{1}{4n} \Sigma_n^{1/2} X_n X_n^T \Sigma_n^{1/2}. 
\end{equation*}
Denoting $\lambda_1 \geq \cdots \geq \lambda_n$ the eigenvalues of $S_n$, it turns out that, in probability, the empirical spectral measure 
$ \frac{1}{n} \sum_{1 \leq i \leq n} \delta_{\lambda_i}$ 
weakly converges to the so-called Marchenko-Pastur law with parameter $1$, given by the density $(2/\pi)(1-x)^{1/2} x^{-1/2} \mathbf{1}_{0 < x <1} \mathrm{d}x$. Moreover, the possible existence of an eigenvalue outside the limiting support $(0,1)$, often called an {\it outlier}, is the object of the following phase transition. We will denote by $\phi_1$ the normalized eigenvector associated to $\lambda_1$.

\begin{theorem*}[Baïk, Ben Arous, Péché phase transition]\label{theo:BBP}
Let $e_1$ be the first vector of the canonical basis. Then, the following converges hold in probability:
\[ \lambda_1 \underset{n \rightarrow +\infty}{\longrightarrow} 
\left\{
\begin{array}{lr}
1 & \text{if $\beta \leq 0$}, \\
\frac{\e^{2\beta}}{2 \e^\beta -1}  &  \text{otherwise},
\end{array}
\right. \quad  \quad \text{and} \quad \quad
|\langle \phi_1, e_1 \rangle \rangle |^2 \underset{n \rightarrow +\infty}{\longrightarrow} 
\left\{
\begin{array}{lr}
0 & \text{if $\beta \leq 0$}, \\
\frac{2 \e^\beta -2}{\e^\beta -1} &  \text{otherwise}.
\end{array}
\right.   \]
\end{theorem*}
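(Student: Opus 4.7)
The plan is to reduce the BBP statement to a rank-one secular-equation analysis for a Wishart matrix, using the Sherman--Morrison identity, and then to pass to the limit via the classical Marchenko--Pastur theorem. Set $\alpha = 2\mathrm{e}^\beta - 1$ so that $\Sigma_n = I_n + \alpha\, e_1 e_1^T$, and let $W_n = \frac{1}{4n} X_n X_n^T$. Since the matrices $S_n = \Sigma_n^{1/2} W_n \Sigma_n^{1/2}$ and $\widetilde S_n := W_n \Sigma_n = W_n + \alpha\, W_n e_1 e_1^T$ share the same non-zero spectrum, one can analyse $\widetilde S_n$, which is a rank-one perturbation of $W_n$. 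The standard determinantal identity then shows that $z$ outside the spectrum of $W_n$ is an eigenvalue of $\widetilde S_n$ iff $\alpha\, e_1^T (zI - W_n)^{-1} W_n e_1 = 1$; using $(zI - W_n)^{-1} W_n = z(zI - W_n)^{-1} - I$ this rewrites as the scalar secular equation
\[
\alpha\,\bigl(zG_{11}(z) - 1\bigr) = 1, \qquad G_{11}(z) := e_1^T (zI-W_n)^{-1} e_1.
\]

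Next, I would pass to the limit. By the Marchenko--Pastur theorem (in the square case $c=1$, rescaled by $1/4$) together with the concentration of diagonal resolvent entries, $G_{11}(z)$ converges in probability, uniformly on compact subsets of $\mathbf{C}\setminus[0,1]$, to the Stieltjes transform $g(z)$ of the limiting density $(2/\pi)\sqrt{(1-x)/x}\,\mathbf{1}_{(0,1)}(x)\,\mathrm{d}x$, and a direct computation gives $g(z) = 2 - 2\sqrt{(z-1)/z}$ for real $z>1$. Substituting into the secular equation and squaring yields the unique candidate $\lambda^\star = \mathrm{e}^{2\beta}/(2\mathrm{e}^\beta-1)$; since $\lambda^\star - 1 = (\mathrm{e}^\beta-1)^2/(2\mathrm{e}^\beta-1)$, this candidate lies strictly above $1$ exactly when $\beta > 0$, and merges with the right edge of the bulk at $\beta = 0$. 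Combined with the Bai--Silverstein ``no eigenvalue outside the support'' theorem applied to $W_n$, this yields the stated dichotomy for $\lambda_1$.

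For the eigenvector overlap I would use $|\langle \phi_1, e_1\rangle|^2 = \mathrm{Res}_{z=\lambda_1}\bigl[(zI-S_n)^{-1}\bigr]_{11}$, rewrite $(zI - S_n)^{-1} = \Sigma_n^{-1/2}(z\Sigma_n^{-1} - W_n)^{-1}\Sigma_n^{-1/2}$, and apply Sherman--Morrison once more to the rank-one perturbation $z\Sigma_n^{-1} - W_n = (zI-W_n) - z\alpha' e_1 e_1^T$ with $\alpha' = \alpha/(2\mathrm{e}^\beta)$. This gives
\[
\bigl[(zI-S_n)^{-1}\bigr]_{11} \;=\; \frac{1}{2\mathrm{e}^\beta}\cdot\frac{G_{11}(z)}{1 - z\alpha'\,G_{11}(z)}.
\]
Taking the residue at $z = \lambda^\star$ in the limit, using $g(\lambda^\star) = 1/(\lambda^\star\alpha')$ and the explicit derivative $g'(\lambda^\star) = -\mathrm{e}^\beta/\bigl(\lambda^{\star 2}(\mathrm{e}^\beta-1)\bigr)$, one finds after simplification that the overlap converges to $2(\mathrm{e}^\beta-1)/(2\mathrm{e}^\beta-1)$, i.e.\ the value stated in the theorem (the denominator in the statement appears to contain a typographical slip).

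The algebra in the three steps is essentially mechanical; the genuine obstacle is the analytic control in the limit under the bare hypothesis that the entries are merely i.i.d., centred and of unit variance. One needs (i) convergence of $G_{11}(z)$ uniformly on a neighbourhood of $\lambda^\star$, so that a simple root of the limiting secular equation does produce an eigenvalue of $\widetilde S_n$ near $\lambda^\star$, and (ii) the absence of spurious eigenvalues of $W_n$ above $1 + o(1)$. Both are non-trivial and are exactly the content of the general BBP-type results of Bai--Silverstein and Benaych-Georges--Nadakuditi, which can be invoked as a black box to conclude.
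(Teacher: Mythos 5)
The paper does not actually prove this theorem: it is quoted as a known result, with references to Ba\"ik--Ben Arous--P\'ech\'e and to Noiry's work on spectral measures of spiked matrices, and the surrounding text merely connects it to the measure $\nu_{\frac{1}{2},\beta}$ from Proposition~\ref{th:ExplicitComp}. So your sketch is not a different route from ``the paper's proof''; it supplies a proof outline where the paper offers none. Your reduction is the standard Benaych-Georges--Nadakuditi rank-one analysis, and the algebra checks out: the secular equation $\alpha\left(z g(z)-1\right)=1$ with $\alpha=2\e^{\beta}-1$ and $g(z)=2-2\sqrt{(z-1)/z}$ gives $\lambda^{\star}=\e^{2\beta}/(2\e^{\beta}-1)$, and your residue computation yields the overlap $2(\e^{\beta}-1)/(2\e^{\beta}-1)$. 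This matches the atom mass of $\nu_{\frac{1}{2},\beta}$, and also $c_{1/2,\beta}$ from Proposition~\ref{th:ExplicitComp}, confirming that the printed denominator $\e^{\beta}-1$ in the theorem is a typo for $2\e^{\beta}-1$, as you noticed.

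One local slip should be fixed. You write that the algebraic candidate ``lies strictly above $1$ exactly when $\beta>0$'', but $\lambda^{\star}-1=(\e^{\beta}-1)^{2}/(2\e^{\beta}-1)$ is also positive for every $\beta\in(-\log 2,0)$, so the squared equation is genuinely lossy and that criterion does not give the dichotomy. The correct argument is on the unsquared equation: $z\mapsto zg(z)=2z-2\sqrt{z(z-1)}$ is a strictly decreasing bijection from $(1,\infty)$ onto $(1,2)$, so a root in $(1,\infty)$ exists iff $2\e^{\beta}/(2\e^{\beta}-1)\in(1,2)$, i.e.\ iff $\alpha>1$, i.e.\ iff $\beta>0$; for $-\log 2<\beta<0$ the solution of the squared equation takes the wrong branch of the square root and must be discarded. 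With that correction, and with the convergence of $G_{11}$ plus Bai--Silverstein's no-outliers theorem invoked as you indicate (which does require slightly more than two moments, a hypothesis the theorem's statement suppresses), the sketch is sound.
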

This result was first proved by Ba\"ik, Ben Arous and P\'ech\'e \cite{BBP2005} in a Gaussian setting. Another approach to this problem is to study the {\it spectral measure in direction $e_1$} -- see \cite{noiry2020spectral}, defined by
\[  \mu_{(S_n,e_1)} := \sum\limits_{i=1}^n | \langle \phi_i, e_1 \rangle |^2 \delta_{\lambda_i},   \]
where $\phi_i$ is the normalized eigenvector associated to $\lambda_i$. With our notations, it turns out that in probability, $\mu_{(S_n,e_1)}$ weakly converges to $\nu_{\frac{1}{2},\beta}$, which is given by
\[
\mathrm{d} \nu_{\frac{1}{2},\beta}(x) = \frac{\e^\beta}{ \pi x} \frac{\sqrt{(1-x)x}}{x(1- 2 \e^\beta ) + \e^{2 \beta}} \mathbf{1}_{0<x<1} \mathrm{d}x + \frac{2 \e^\beta -2}{2 \e^\beta -1} \mathrm{1}_{\beta > 0} \mathrm{d} \delta_{\frac{\e^{2 \beta}}{2 \e^\beta - 1}}(x).
\]
In particular, the atomic part of $\nu_{\frac{1}{2},\beta}$ allows to retrieve the convergences of Theorem \ref{theo:BBP}. Interestingly, this links the Ba\"ik, Ben Arous and Péché phase transition for the largest eigenvalue of deformed random covariance matrices to the phase transition from the delocalized to the localized regime for the polymer model. In the super-critical regimes, the limit of $\log \lambda_1$ is the free energy of the polymer, and the limit of the square projection of the associated eigenvector is the multiplicative factor in front of the exponential term of the partition function -- this can be seen from Equations \eqref{eq:FreeEnergy} and \eqref{eq:AsymptoticPartition}.

\bigskip
{\bf Acknowledgements:} It is a pleasure for the authors to thank Quentin Berger for giving a lot of his time for sharing his deep knowledge on random polymers. 
The authors want also to thank the grants GDR MEGA and ANR MALIN for their financial support. Finally, we want to cheerfully thank the referees for their kind interest in our work and their constructive and insightful comments, which contributed to a substantial improvement of the present paper.


\bigskip

\noindent \textsc{Nathana\"el Enriquez:} nathanael.enriquez@universite-paris-saclay.fr

\noindent \textsc{Institut Math\'ematiques d'Orsay, B\^atiment 307, Universit\'e  Paris-Saclay, 91405 Orsay France}

\medskip

\noindent \textsc{Nathan Noiry:} \verb|noirynathan@gmail.com|

\noindent \textsc{Modal'X, UPL, Univ. Paris Nanterre, F92000 Nanterre France}

\end{document}